\documentclass[12pt,leqno]{article}
\usepackage{ulem}
\usepackage[english]{babel}
\usepackage{graphicx}
\usepackage{cancel}
\usepackage{amsmath}
\usepackage{amsthm}
\usepackage{amssymb}
\usepackage{times}
\usepackage{enumerate}
\usepackage{amsmath}
\usepackage{amsfonts}
\usepackage{pdfsync}
\usepackage{cleveref}
\usepackage{autonum}
\usepackage[usenames, dvipsnames]{color}
\usepackage{ulem} 
\usepackage{comment}

\newcommand{\R}{{\mathbb R}}
\newcommand{\N}{{\mathbb N}}
\renewcommand{\L}{\Lambda}

\DeclareMathOperator{\diam}{diam}

\DeclareMathOperator{\dist}{dist}

\pagestyle{myheadings}

\makeatletter
\def\author#1{\gdef\autrun{\def\and{\unskip, }#1}\gdef\@author{#1}}

\makeatother

\def\d{\delta}


\newcommand{\Om}{\Omega}
\newcommand{\om}{\omega}

\renewcommand{\d}{{\rm{d}}}

\newcommand{\BBB}{\color{black}}




\newtheorem{theorem}{Theorem}[section]
\newtheorem{corollary}[theorem]{Corollary}
\newtheorem{lemma}[theorem]{Lemma}

\newtheorem*{theorem*}{Theorem}
\usepackage{amsthm}



\theoremstyle{definition}
\newtheorem{definition}[theorem]{Definition}

\newtheorem*{hypothesisH1''}{Hypothesis (S$'_{x_*}$)}
\newtheorem*{hypothesisS'}{Hypothesis (S$'_{x_*}$)}

\newtheorem*{problemP'}{Problem \textbf{(P$'$)}}


\numberwithin{equation}{section}

%




\begin{document}

\title{Local Lipschitz continuity of the minimizers  of nonuniformly convex functionals    under the  Lower Bounded Slope Condition }
\author{F. Giannetti\thanks{Dipartimento di Matematica e Applicazioni ``R. Caccioppoli'',  Universit\`a degli Studi di Napoli ``Federico II'', via Cintia 80126, Napoli, flavia.giannetti@unina.it}
        \and G. Treu\thanks{{\it Corresponding author,} Dipartimento di Matematica ``Tullio Levi-Civita'', Universit\`a di Padova, via Trieste 63 I-35121, Padova, giulia.treu@unipd.it}}

\maketitle

\begin{abstract}
We prove the local Lipschitz regularity of the minimizers   of
functionals of the form
\[
\mathcal I(u)=\int_\Omega f(\nabla u(x))+g(x)u(x)\,dx\qquad
u\in\phi+W^{1,1}_0(\Omega)
\]
where $g$ is bounded and $\phi$ satisfies the Lower Bounded Slope Condition.
The function $f$ is assumed to be convex but not  uniformly convex everywhere. As byproduct, we also prove the existence  of a locally Lipschitz minimizer  
for a class of functionals of the type above but allowing to the function $f$ to be nonconvex.
\end{abstract}

\begin{keywords}
Lower Bounded Slope Condition, Lipschitz regularity for minimizers, barriers, nonconvex problems.
\end{keywords}

\noindent {\bf AMS Classifications:} 35B51, 49N60, 49-XX

\section{Introduction}

In the present paper   we continue our study in \cite{GT}   of the Lipschitz regularity properties of the minimizers of some functionals in the Calculus of Variations. In particular, we deal with variational integrals
whose Lagrangian  is not uniformly convex with respect to the gradient variable in the entire domain and whose   minimizers satisfy suitable regularity condition  on the boundary.

\noindent Let us start discussing the lack of uniform convexity everywhere.  In the pioneristic paper \cite{ChEv}, the authors faced  the regularity of the minimizers of functionals of the form
\begin{eqnarray}\label{funzCE}
\int_{\Om} f(\nabla
u(x))\,\,d{x},
\end{eqnarray}
with $f$ satisfying  an asymptotic convexity condition, and proved Lipschitz estimates under quadratic growth assumptions. Later on, such estimates were extended to the superquadratic case  in \cite{GM}. Their results  find motivation if one interpret the integral in \eqref{funzCE} as the stored energy of an elastic body: since  no realistic body bears very large deformations, the function $f$  had to be divergent  at infinity not too  wildly so that good estimates  are known. Starting from \cite{ChEv} and \cite{GM}, several authors considered  functionals  uniformly convex not everywhere and  studied the regularity for their minimizers, also in the case of presence of lower order terms.
We refer, for example, to   \cite{LPV}, \cite{FPV1}, \cite{FoGo},  \cite {CGGP1} for smooth functions satisfying standard $p$-growth and to  \cite{CupGuiMas}, \cite{ElMarMas1}, \cite{ElMarMas2} and  \cite {CGGP2} for smooth functions  with nonstandard growth.    The case of non smooth functions satisfying $p$-uniform convexity at infinity was firstly faced in \cite{FF}.

On the other hand, besides some classical and old results by Hilbert, Haar {and} Rado (see   \cite[Chapter 1]{G}),  an increasing interest  on the study of the regularity of minimizers  assuming a boundary  condition is undisputable. 
 This is  due to the fact that suitable boundary assumptions allow to relax, or even drop, both uniform  convexity   and growth conditions. 

\noindent {For example, in \cite{LP} and in \cite{CarGiaLeoPas} the authors  deduced higher integrability  and also $L^\infty$ estimates for minimizers from suitable assumptions on the boundary datum, just assuming a growth condition from below.

\noindent
Moreover, under suitable  boundary datum, Lipschitz and H\"older continuity results for the minimizers have been obtained considering functionals as in \eqref{funzCE} without assuming neither growth conditions from below and from above nor the strict convexity. To this regard we mention, for example, \cite{Cel}, \cite{Cl}, \cite{MT4}, \cite{BousCont}, \cite{BMT}.

\noindent More recently, in \cite{BB}, \cite{FT} and \cite{GT},    global Lipschitz continuity {results have been obtained in case   of functionals of the type}
\begin{eqnarray}
& \displaystyle \int_{\Om} \Big{[}f(\nabla
u(x))+G(x,u(x))\Big{]}\,\,d{x},
\end{eqnarray}
and for  minimizers  satisfying the Bounded Slope Condition (BSC) on the boundary. {We notice that} the validity of (BSC) guarantees  the possibility to construct suitable barriers, yielding pointwise estimates from above and below.  In the three papers above the uniform convexity of $f$ is not assumed in the entire domain. In particular, in \cite{BB} $f$ satisfies a uniform convexity outside of a ball while, in \cite{FT} and \cite{GT},  the uniform convexity is required only in some {suitable} regions of the domain.

\noindent {As in \cite{BB}, in the present paper} we focus  {our} attention on 
 functionals {of the type}
\begin{eqnarray}\label{funzionale}
& \displaystyle {\mathcal I}(u):=\int_{\Om} \Big{[}f(\nabla
u(x))+g(x)u(x)\Big{]}\,\,d{x}.
\end{eqnarray}
{where $f$ is assumed to be} the uniformly convex  outside of a  ball. 

\noindent Note that variational problems of the type 
$$\min_{u\in W_0^{1,2}}\int_{\Omega}F(\nabla u)+\lambda u\, dx$$
with $\Omega\subset \R^2, \lambda\in\R^+$ and
\begin{eqnarray*}
F(\xi)=\begin{cases}
|\xi| \quad |\xi|\le 1\\ \frac{1}{2}|\xi|^2+\frac{1}{2} \quad |\xi|> 1
\end{cases}
\end{eqnarray*}
arise in the study of optimal thin torsion
rods (see for example \cite{ABF}) and can be obviously generalized to the case $\lambda=g(x)$ with $g\in L^{\infty}(\Omega)$.

\noindent On the contrary to \cite{BB} here  we confine ourselves to minimizers satisfying a one-sided Bounded Slope Condition  according to Definition \ref{defbsc}, instead of the Bounded Slope Condition.

\noindent  In this way,  we considerably enlarge the class of boundary functions allowed. Although, under the Lower Bounded Slope Condition {(LBSC)}, there are still significant implications for the regularity of the minimizers as shown  for the first time by Clarke in \cite{Cl} for functionals depending only on the gradient variable.  We would mention also \cite{BC}, where the local regularity {proven in \cite{Cl}} has been extended  to the more general case $g\neq 0$ and $f$ everywhere uniformly convex and the recent paper \cite{BS}, where
the one-sided bounded slope condition has been assumed to study the Lipschitz regularity of  solutions to Cauchy–Dirichlet problems
associated with evolutionary partial differential equations.

In order to clarify our setting and state the main result,  we now describe the integral functional object of our interest in detail.

\noindent  We consider  a function
$f\colon
\R^N\rightarrow \R$   
satisfying  the
following hypotheses
\begin{itemize}
\item[(F1)]  $f(\xi)\ge 0$ and  such that $f(0)=0$;
\item[(F2)]  
${\rm dom}f:=\{x\in \R^N: f(x)\in \R\}
=\R^N$;
\item[(F3)] {there exists a} radius $r>0$  such that  
$$f(\theta\xi+(1-\theta)\zeta)\le\theta f(\xi)+(1-\theta)f(\zeta)-\frac{\epsilon}{2} \theta(1-\theta)|\xi-\zeta|^2$$
 for every $\xi, \zeta\in \R^N$ such that $|\xi|>r+1$ and for every $\theta\in [0,1]$.
 \end{itemize}
\medskip

{Assuming} that $\Om\subset \R^N$ is a $R$-uniformly convex domain in accordance with  Definition \ref{Runiforme}, we consider  integral
functionals as in \eqref{funzionale},
where $g:\Omega\to \R$  is measurable and bounded.

{It is well known that} a  standard application of the Direct Method of the Calculus of Variations ensures the existence of a minimizer of \eqref{funzionale} in the class $\phi+ W^{1,1}(\Omega)$, for any $\phi\in  W^{1,1}(\Omega)$, provided $f$ is convex and
superlinear. This latter condition is satisfied under our assumptions. We point out that our hypotheses on the  function $f$ apply to convex Lagrangians that can be not necessarily differentiable. Moreover, we underline that neither growth conditions from above nor radial structure will be required on $f$.

\noindent We focus our attention to the local Lipchitz regularity of the minimizers as in \cite{Cl} and \cite{BC} and prove the following {two results.}

\begin{theorem}\label{aprioriestimate}
Assume that $f_1\colon \R^N\to \R$ is a $C^{\infty}$ convex function satisfying  hypotheses {\rm (F1)-(F3)} and $f_2\colon \R^N\to \R$ is a $C^{\infty}$ convex function satisfying  hypotheses {\rm (F1)-(F2)} and
$$f_2(\theta\xi+(1-\theta)\zeta)\le\theta f_2(\xi)+(1-\theta)f_2(\zeta)-\frac{\bar\epsilon}{2} \theta(1-\theta)|\xi-\zeta|^2$$
 for every $\xi, \zeta\in \R^N$ {and for every $\theta\in [0,1]$.}
 Suppose that $\Omega$ is a $R$-uniformly convex set and let $g\colon \Omega\to \R$ be a $C^{\infty}(\bar \Omega)$ function.  Fix a function $\phi$ satisfying the (LBSC) and let $u$ be the unique minimizer  of 
 \begin{equation}\label{funzconv}
 \mathcal{J}(u)=\int_{\Omega}f_1(\nabla u)+f_2(\nabla u)+g(x) u\, dx
 \end{equation}
 in the class $\phi+W^{1,1}_0(\Omega)$.
Then, if $u$ is bounded in $\Omega$, it is actually locally Lipschitz  and the following estimate holds
\begin{eqnarray}\label{estimateapp}
 |\nabla u(x)|\le \frac{Q}{\dist(x,\partial \Omega)}\quad \mathrm{for\, a.e} \quad x\in  \Omega,  
\end{eqnarray}
where $Q=Q(\|u\|_\infty, R, {\rm diam}\,\Omega, \epsilon, N, L, \|\phi\|_{1,\infty})$ is independent on $\bar \epsilon$.
\end{theorem}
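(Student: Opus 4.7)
Once $f_2$ is added, $f_1+f_2$ is smooth and uniformly convex with modulus $\bar\epsilon>0$, so the direct method gives a unique minimizer $u$, and elliptic regularity produces $u\in C^\infty(\Omega)$ satisfying the Euler--Lagrange equation
\begin{equation*}
-\div\bigl[\nabla f_1(\nabla u)+\nabla f_2(\nabla u)\bigr]+g(x)=0\qquad\text{in }\Omega.
\end{equation*}
The delicate point is that the bound on $|\nabla u|$ must be independent of $\bar\epsilon$, so that it persists when the strict regularisation $f_2$ is sent to zero in the approximation feeding the main theorem. My strategy splits in two: first extract one-sided boundary information from the LBSC and the geometry of $\Omega$, then run an interior Moser-type iteration on the differentiated EL equation in which $f_2$ is used only qualitatively, through $D^2 f_2\ge 0$.

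\textbf{Barriers from LBSC.} At each $y_0\in\partial\Omega$, the LBSC supplies an affine function $\ell_{y_0}(x)=\phi(y_0)+p_{y_0}\cdot(x-y_0)$ with $|p_{y_0}|\le\|\phi\|_{1,\infty}$ and $\ell_{y_0}\le\phi$ on $\partial\Omega$. Using the outer tangent ball of radius $R$ at $y_0$ furnished by the $R$-uniform convexity of $\Omega$, I would correct $\ell_{y_0}$ by an outer-sphere quadratic $-\lambda(|x-z_{y_0}|^2-R^2)$, choosing $\lambda=\lambda(N,\|g\|_\infty)$ so that the resulting $\psi_{y_0}$ is a subsolution of the EL equation (using only convexity of $f_1+f_2$, not uniform convexity), sits below $\phi$ on $\partial\Omega$, and touches $\phi$ at $y_0$. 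Comparison then yields $u\ge\psi_{y_0}$ in $\Omega$, delivering the $\bar\epsilon$-free one-sided boundary estimate
\begin{equation*}
u(x)-\phi(y_0)\ \ge\ -\Lambda\,|x-y_0|\qquad\forall\, x\in\Omega,\ y_0\in\partial\Omega,
\end{equation*}
with $\Lambda$ depending only on $\|\phi\|_{1,\infty}$, $R$, $\diam\Omega$, $\|g\|_\infty$ and $\|u\|_\infty$.

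\textbf{Interior gradient estimate.} Differentiating the EL equation in $x_k$ yields
\begin{equation*}
-\partial_j\bigl[A_{ij}(\nabla u)\,\partial_i\partial_k u\bigr]+\partial_k g=0,\qquad A(\xi):=D^2 f_1(\xi)+D^2 f_2(\xi).
\end{equation*}
I would test this linear equation against $\partial_k u\cdot\chi^{2q}\Phi(|\nabla u|^2)$, with $\chi$ a cutoff localised in $B_\rho(x_0)\Subset\Omega$ and $\Phi$ smoothly supported on $\{s>(r+1)^2\}$, sum over $k$, and integrate by parts to obtain a Caccioppoli inequality for $w:=(|\nabla u|^2-(r+1)^2)_+$. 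Hypothesis (F3) gives $D^2 f_1(\nabla u)\ge\epsilon I$ on $\{w>0\}$, while the non-negative contribution of $D^2 f_2$ on the left-hand side is simply discarded, so $\bar\epsilon$ does not enter the coercivity; the matching right-hand side term is controlled through $\|g\|_\infty$ and the boundary information of the previous step, plus $\|u\|_\infty$. Moser iteration then closes to give $\sup_{B_{\rho/2}(x_0)}|\nabla u|\le C(\|u\|_\infty/\rho+1)$, with $C$ depending only on the parameters listed in the theorem, and the choice $\rho=\dist(x_0,\partial\Omega)/2$ produces the claimed inequality. \emph{The main obstacle} is precisely this $\bar\epsilon$-independence: the naive argument using $A\ge\bar\epsilon I$ globally would leave $\bar\epsilon$ throughout, and the remedy is to work with the truncation $w$ that vanishes where $f_1$ is not uniformly convex, so that $f_2$ is never called upon to supply ellipticity.
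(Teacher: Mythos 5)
Your overall architecture (barrier at the boundary plus an interior gradient bound that uses only $D^2f_1\ge\epsilon I$ outside $B_{r+1}$ and discards $D^2f_2\ge 0$) correctly identifies where the $\bar\epsilon$-independence must come from, but the interior step as proposed has a genuine gap. In the Caccioppoli/Moser argument, after testing the differentiated equation with $\partial_k u\,\chi^{2q}\Phi(|\nabla u|^2)$ you must absorb the cutoff terms
\[
\int A_{ij}(\nabla u)\,\partial_i\partial_k u\;\partial_k u\;\Phi\;\partial_j(\chi^{2q})\,dx ,
\]
and by the matrix Cauchy--Schwarz inequality this costs a factor $\bigl(A(\nabla u)\nabla\chi\cdot\nabla\chi\bigr)^{1/2}$, i.e.\ an \emph{upper} bound on the largest eigenvalue of $D^2f_1(\nabla u)+D^2f_2(\nabla u)$ on the set where $|\nabla u|$ is large. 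The hypotheses provide no growth condition from above on $f_1$ or $f_2$ (the paper explicitly avoids one), so the ellipticity ratio is not controlled and the iteration does not close; any bound you could extract this way would depend on $\sup D^2f$ along the (unknown) range of $\nabla u$, and the $D^2f_2$ part would reintroduce $\bar\epsilon$-dependent (and worse, unbounded) quantities. A similar issue affects your boundary step: a quadratic lower barrier is a subsolution of the Euler--Lagrange equation only if $\mathrm{tr}\bigl[D^2f(\nabla\psi)D^2\psi\bigr]$ dominates $g$, which again needs quantitative ellipticity along $\nabla\psi$, not mere convexity.

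The paper avoids second derivatives of $f$ altogether. It works at the level of the functional with the dilation method of Clarke and Bousquet--Clarke: for $z\in\partial\Omega$ it compares $u$ with $u_\lambda(y)=\lambda u((y-z)/\lambda+z)$, uses minimality twice (once for each of the two truncations $\min$ and $\max$) and sums the resulting inequalities, so that hypothesis (F3) applied to the pair $(\nabla u,\nabla u_\lambda)$ on the set where $|\nabla u|>r+1$ yields $\epsilon\int|\nabla u_\lambda-\nabla u|^2$ on the left, while the $f_2$ contributions are discarded by plain convexity. Dividing by $(1-\lambda)^2$, letting $\lambda\to1$, and combining Sobolev, H\"older and Cavalieri with the Hartman--Stampacchia differential-inequality lemma shows that the level sets of $x\mapsto\langle\nabla u(x),x-z\rangle-u(x)$ are empty above an explicit threshold $q_0$ independent of $\bar\epsilon$; varying $z$ over a dense subset of $\partial\Omega$ then gives \eqref{estimateapp}. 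You would need to either import enough structure (two-sided growth) to justify the Moser iteration, which changes the theorem, or switch to a comparison-type argument of this kind.
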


\begin{theorem}\label{mainth}  Let $f$ be a convex function such that  the hypotheses (F1)-(F3) hold. { Assume that $\Omega$ is a $R$-uniformly convex set} and let $g\colon \Omega\to \R$ be a measurable bounded function.   For a fixed function $\phi$ satisfying the (LBSC), any minimizer $u$ of $\mathcal{I}$ in \eqref{funzionale} is  locally Lipschitz in $\Omega$.
\end{theorem}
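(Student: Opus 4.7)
The plan is to derive Theorem~\ref{mainth} from the a priori estimate \eqref{estimateapp} of Theorem~\ref{aprioriestimate} through a double regularization of $f$ and $g$, and then to transfer the resulting local Lipschitz regularity from one particular minimizer to an arbitrary one by exploiting the uniform convexity of $f$ outside the ball $B_{r+1}$ provided by (F3).

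The first step is to smooth the data. Given a symmetric mollifier $\rho_\delta$, set
\[
f_1^\delta(\xi):=(f*\rho_\delta)(\xi)-(f*\rho_\delta)(0),\qquad f_2^\delta(\xi):=\tfrac{\delta}{2}|\xi|^2,\qquad g^\delta:=g*\rho_\delta,
\]
so that $f_1^\delta$ is $C^\infty$, convex, and satisfies (F1)--(F2) together with (F3) (at worst with radius $r+\delta$ and the same $\epsilon$), while $f_2^\delta$ is uniformly convex with constant $\bar\epsilon=\delta$ and $g^\delta\in C^\infty(\bar\Omega)$ with $\|g^\delta\|_\infty\le\|g\|_\infty$. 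Theorem~\ref{aprioriestimate} then applies to
\[
\mathcal J_\delta(v)=\int_\Omega\bigl[f_1^\delta(\nabla v)+f_2^\delta(\nabla v)+g^\delta(x)v\bigr]\,dx,
\]
and yields a unique minimizer $u_\delta\in\phi+W^{1,1}_0(\Omega)$.

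Next I would extract uniform estimates. The (LBSC) produces a globally Lipschitz barrier $\psi^-$ agreeing with $\phi$ on $\partial\Omega$ (the upper envelope of the affine minorants granted by the condition), and combining a comparison argument with energy minimality provides a uniform bound $\|u_\delta\|_\infty\le M$ depending only on $\|g\|_\infty$, $\diam\Omega$ and $\|\phi\|_{1,\infty}$. The crucial feature of \eqref{estimateapp} is that $Q$ does not depend on $\bar\epsilon$, so that, for $\delta$ small, plugging in $M$ yields
\[
|\nabla u_\delta(x)|\le\frac{Q}{\dist(x,\partial\Omega)}\qquad\text{for a.e.\ }x\in\Omega,
\]
uniformly in $\delta$. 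Up to a subsequence, $u_\delta\to u_*$ locally uniformly and $\nabla u_\delta\wtos\nabla u_*$ in $L^\infty_{\rm loc}(\Omega)$, while lower semicontinuity of $\mathcal I$ together with $\mathcal J_\delta(u_\delta)\le\mathcal J_\delta(v)$ identifies $u_*$ as a locally Lipschitz minimizer of $\mathcal I$.

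Finally, given an arbitrary minimizer $u$ of $\mathcal I$, the midpoint $w:=\tfrac12(u+u_*)$ is also a minimizer by convexity, which forces the pointwise convexity inequality for $f(\nabla w)$ to be saturated almost everywhere. Applying (F3) with $\theta=1/2$ at points where $|\nabla u|>r+1$ (respectively $|\nabla u_*|>r+1$) then compels $\nabla u=\nabla u_*$ a.e.\ on $\{|\nabla u|>r+1\}\cup\{|\nabla u_*|>r+1\}$, while trivially $|\nabla u|\le r+1$ on the complement. Consequently
\[
|\nabla u(x)|\le r+1+\frac{Q}{\dist(x,\partial\Omega)}\qquad\text{a.e.\ in }\Omega,
\]
which is the claimed local Lipschitz continuity. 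The principal obstacle is the uniform $L^\infty$ bound on $u_\delta$: since (LBSC) supplies only lower affine barriers, the upper bound must be obtained via an energy-type comparison exploiting the superlinearity of $f$ rather than the symmetric two-sided maximum-principle argument available under the full BSC.
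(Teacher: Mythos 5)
Your overall strategy is the one the paper follows: regularize to smooth functionals that are uniformly convex everywhere, invoke Theorem~\ref{aprioriestimate} and exploit the independence of $Q$ from the everywhere-ellipticity constant, obtain a uniform $L^\infty$ bound so that the gradient estimate is uniform in the approximation parameter, pass to the limit to produce one locally Lipschitz minimizer, and finally transfer the regularity to an arbitrary minimizer by the midpoint argument combined with (F3). Steps 2 and 3 of your proposal match the paper's Steps 2 and 3 almost verbatim (the paper introduces the sets $A$ and $B$ and derives a contradiction from $\mu(B)>0$, which is exactly your saturation-of-convexity argument).

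There is, however, one genuine gap: your choice of approximation by mollification, $f_1^\delta=f*\rho_\delta-(f*\rho_\delta)(0)$. Since no growth condition from above is assumed on $f$, the mollification error $(f*\rho_\delta)(\xi)-f(\xi)$ need not be bounded uniformly in $\xi$ (think of $f(\xi)=e^{|\xi|^2}$), so you do not get $f_1^\delta\le f$ nor $\sup_{\R^N}|f_1^\delta-f|\to 0$. This breaks two steps that you treat as routine. First, the uniform bound $\|u_\delta\|_\infty\le M$: the paper obtains it from the comparison principle of Theorem~\ref{thm:comparison1}, whose hypothesis is precisely the two-sided sandwich $f-1\le f_k\le f$ on all of $\R^N$, needed so that the dual barriers $\omega_\alpha^k$ built from $f_k^*$ are trapped between two fixed functions; with mollified approximants this sandwich fails. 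Second, the identification of the limit $u_*$ as a minimizer of $\mathcal I$: to show that $\{u_\delta\}$ is a minimizing sequence one needs $\int_\Omega f_1^\delta(\nabla v)\,dx\le\int_\Omega f(\nabla v)\,dx+o(1)$ for a fixed competitor $v$, which again uses $f_1^\delta\le f$ (or uniform closeness); pointwise convergence plus dominated convergence does not go through because no integrable dominant for $f_1^\delta(\nabla v)$ is available. The fix is exactly the paper's Theorem~\ref{approximation} (an Azagra-type global approximation producing smooth convex $f_k$ with $f-2\epsilon\le f_k\le f$ on all of $\R^N$ and preserving (F3)); substituting that approximation for your mollification, and adding the quadratic perturbation $\frac1k|\xi|^2$ as you do, the rest of your argument is sound.
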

{The first of the two results is helpful to prove the second but it is of independent interest. Since its proof is anything but immediate, an entire section has been devoted to it. The functionals considered there have} the Lagrangian with respect to the gradient given by} the sum of two smooth functions $f_1$ and $f_2$, with $f_1$ satisfying assumptions {\rm (F1)-(F3)} and $f_2$ uniformly convex everywhere. With the help of  Theorem 3.1 in \cite{BC} and of Theorem 8.1 in \cite{St}, {we get}  the desired regularity for the unique bounded minimizer of $$\mathcal{J}(u)=\int_{\Omega}f_1(\nabla u)+f_2(\nabla u)+g(x) u\, dx$$ {and} once observed that the minimizer is actually also in the class $C^2(\Omega)$, with some efforts, we prove that its gradient satisfies an estimate independent on the ellipticity constant of $f_2$. The proof of Theorem \ref{mainth} is in Section \ref{sec4} and is splitted in different steps. We start applying the apriori estimate obtained in Theorem \ref{aprioriestimate} to a sequence $\mathcal{I}_k$ of  functionals  approximating $\mathcal{I}$ and, in particular, we prove that it is independent on $k$. {Later on} we show that the sequence of the minimizers $u_k$ of $\mathcal{I}_k$  is actually a minimizing sequence for $\mathcal{I}$ and that converges, up to a subsequence, to a function which inherits the same estimate satisfied by each of them.
The last step {consists in proving} that every minimizer $u$ is locally Lipschitz continuous.

\noindent Even if the global Lipschitz continuity cannot be expected (see the counterexample given in \cite{Cl}),  as a corollary, we  prove that $u$ is H\"older continuous up to the boundary {provided} we add some assumptions on the regularity of  $\partial\Omega$ and on the growth from below of the function $f$ (see Corollary \ref{Clarke}).

\noindent In the last Section of the paper we deal with a class of nonconvex problems. Assuming that the bipolar function $f^{**}$ satisfies conditions (F1)-(F3) and a structure assumption on the set where $f^{**}$ and $f$ do not coincide,  we prove the existence of a minimizer {$u$} of  $\mathcal{I}$ that is still locally Lipschitz, provided the set where $g=0$ has null measure (see Theorem \ref{nonconvex}). For the proof, we use some tools developed in  nonconvex problems of the Calculus of Variations and of Differential Inclusions. {More precisely,} by using a pyramidal construction of the type introduced by Cellina \cite{Cel1} and later developed  in many directions (see for instance \cite{STC}, the book \cite{DM} and references therein), we deduce that the gradient of $u$ belongs almost everywhere to the set where $f^{**}$ and $f$  coincide and this immediately implies that $u$  is also a minimizer of  $\mathcal{I}$. 
We notice that such kind of construction strongly relies on the differentiability almost everywhere of the minimizers of the convexified functionals. For this reason, it is generally added as an hypothesis or it is easily implied by growth assumptions. Here, the minimum $u$ of the convexified functional belongs to the class of locally Lipschitz functions, thanks to Theorem \ref{mainth}, and hence is almost everywhere differentiable.

\noindent We {point out} that, the idea to construct a  minimizer of a nonconvex problem thanks to the preliminary regularity result for the convexified problem has been already present in \cite{FF} but, unlike there, 
we don't assume any growth assumption with respect to the gradient  and we consider functionals depending also on the state variable.  

\noindent As a corollary of Theorem \ref{nonconvex}, we  show the validity of the same result assuming that $g$ doesn't change sign and then the assumption $|\{x\in \Omega:g(x)=0\}|=0$ can be dropped. We  conclude underlying that all the proofs remain valid if we replace the Lower Bounded Slope Condition by the Upper Bounded Slope Condition.

\bigskip

\section{Preliminary results and a comparison principle}\label{Pre}

In this section we give some definitions and remarks.

  \begin{definition}
A function $u\in W^{1,1}(\Om)$ is a {\it minimizer} of
the functional ${\mathcal I}$ if ${\mathcal I}(u)\leq {\mathcal I}(v)$,
for every $v\in u+W^{1,1}_0(\Om)$.
\end{definition}

\begin{definition}[LBSC]\label{defbsc}
A function $\phi:\Omega\subset \R^N \to \R$ satisfies the  {\it lower bounded slope condition} of
rank $M\ge 0$ if for every $\gamma\in\partial\Omega$ there exists
$z_{\gamma}\in \R^N$  such that
\begin{equation}\label{tag:lbsc} 
|z_{\gamma}|\le M\quad {\rm and}\quad \phi(\gamma)+z_{\gamma}\cdot(\gamma'-\gamma)\le\phi(\gamma')\quad \forall\gamma'\in\partial\Omega
\end{equation}
\end{definition}
Recalling that the polar  of a convex function $f:\R^N \to R$ is the function
$f^*\colon \R^N\rightarrow[-\infty,+\infty]$  defined by
$$f^*(\xi):=\sup_{x\in \R^N}\{x\cdot\xi-f(x)\}, \quad \forall \xi\in \R^N$$
(see {for example} \cite{ET}), we state the following Lemma  (see Lemma 3.1 in \cite{GT}.)
\begin{lemma}\label{lemma:superlinearity} Assume that $f$ is  convex, ${\rm dom}f=\R^N$, $f(\xi)\ge 0$ and $f(0)=0$. Then
$f$ is superlinear if and only if  ${\rm dom}f^*=\R^N$.
\end{lemma}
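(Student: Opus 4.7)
My plan is to prove the two implications separately, both via direct arguments from the definitions, using the Fenchel--Young inequality $f(x)+f^*(\xi)\ge x\cdot\xi$ and standard continuity properties of convex functions with full domain.

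For the forward direction (superlinearity $\Rightarrow \dom f^*=\R^N$), I would fix an arbitrary $\xi\in\R^N$ and show that $\sup_{x\in\R^N}\{x\cdot\xi - f(x)\}$ is finite. By superlinearity, there exists $R>0$ such that $f(x)\ge (|\xi|+1)|x|$ whenever $|x|\ge R$. Then for $|x|\ge R$ one has $x\cdot\xi - f(x)\le |x|\,|\xi| - (|\xi|+1)|x| = -|x|\le 0$, so the supremum is attained on the compact ball $\overline{B_R(0)}$. Since $f$ is convex with $\dom f = \R^N$, it is continuous on $\R^N$, so $x\mapsto x\cdot\xi - f(x)$ is continuous and attains its maximum on $\overline{B_R(0)}$. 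Hence $f^*(\xi)<+\infty$.

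For the reverse direction ($\dom f^*=\R^N \Rightarrow$ superlinearity), I would first invoke the fact that $f^*$, being a supremum of affine functions, is convex; combined with $\dom f^* = \R^N$, it is continuous on $\R^N$ and thus bounded on every compact set. In particular, for every $R>0$ the quantity
\[
M_R := \sup\{\,f^*(\xi) : |\xi|\le R\,\}
\]
is finite. The Fenchel--Young inequality gives, for every $x\in\R^N$ and every $\xi\in\R^N$,
\[
f(x)\ge x\cdot\xi - f^*(\xi).
\]
For $x\ne 0$, choosing $\xi = R\,x/|x|$ yields $f(x)\ge R|x| - M_R$, and dividing by $|x|$ gives
\[
\frac{f(x)}{|x|}\ge R - \frac{M_R}{|x|}.
\]
Taking $\liminf$ as $|x|\to\infty$ shows $\liminf_{|x|\to\infty} f(x)/|x|\ge R$; since $R$ was arbitrary, $f$ is superlinear.

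The main conceptual point — and the only nontrivial ingredient — is the continuity (hence local boundedness) of the convex conjugate $f^*$ on $\R^N$ once $\dom f^* = \R^N$; this is a standard result in convex analysis, and I would simply cite it from \cite{ET}. Everything else is a direct computation from the definitions of the Legendre--Fenchel transform and of superlinearity.
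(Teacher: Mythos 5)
Your proof is correct. The paper does not actually prove this lemma --- it simply cites Lemma 3.1 of \cite{GT} --- so there is no internal argument to compare against, but your two implications are the standard ones: Fenchel--Young plus local boundedness of the finite convex function $f^*$ for the reverse direction, and the observation that superlinearity forces $x\mapsto x\cdot\xi-f(x)$ to be eventually nonpositive (hence maximized on a compact ball, where continuity of the finite convex $f$ applies) for the forward direction. Both steps are complete as written, and the only external ingredient you invoke --- continuity of a convex function that is finite on all of $\R^N$ --- is indeed standard and appropriately attributed to \cite{ET}.
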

 Let us fix a Lipschitz function $\phi:\Omega\to \R$ with Lipschitz constant $M$, a point $x_0\in \R^N$
and  {consider}  the  functions
\begin{eqnarray}\label{ausiliarie}
& \displaystyle
\om_{\alpha}(x):=\frac{N}{\alpha} f^*\Big{(}\alpha\frac{x-x_0}{N}\Big{)}.
\end{eqnarray}
introduced in \cite{C3}. They  are Lipschitz continuous in $\Omega$ with a constant $K$ which can be assumed, without loss of generality, greater than $M$.
The following   adapted version of the comparison principle in Theorem 2.4 of \cite{FT} (see also Theorem 2.4 in \cite{GT}) holds. 
\begin{theorem}\label{thm:comparison1}
Assume that $f$, $\{f_k\}_k$  are convex and superlinear functions such that $f-1\le f_k\le f$  in $ \R^N$
and suppose that $g$ is a bounded function such that $\|g\|_{\infty}\le \alpha$.
For a fixed bounded function $\phi\in W^{1,1}(\Omega)$, let  $u_k$ be a minimizer of 
$$\mathcal{J}_k(u):=\int_{\Omega}f_k(\xi)+ g(x)u(x)\, dx$$ in $\phi+W_0^{1,1}(\Omega)$. 
Then there exist $c_1, c_2\in\R$ such that 
\begin{eqnarray}\label{tesi:confronto}
\omega_{\alpha}+c_1\le u_k\le \omega_{-\alpha}+c_2\quad\text{a.e. in}\ \Omega.
\end{eqnarray}

\end{theorem}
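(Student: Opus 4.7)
The plan is a one-sided barrier argument based on a Fenchel calibration. By the symmetry $u_k \mapsto -u_k$, $g \mapsto -g$, $\phi \mapsto -\phi$, $f(\xi)\mapsto f(-\xi)$, which exchanges the roles of $\omega_\alpha$ and $\omega_{-\alpha}$ while preserving every hypothesis (convexity, superlinearity, the sandwich $f-1\le f_k\le f$, and $\|\pm g\|_\infty\le\alpha$), the upper bound reduces to the lower one; I therefore focus on producing $c_1\in\R$ with $\omega_\alpha+c_1\le u_k$ a.e.

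The geometric core is the vector field $A(x):=\alpha(x-x_0)/N$: it satisfies $\div A\equiv \alpha$, and by the very construction of $\omega_\alpha$ one has $\nabla\omega_\alpha(x)=\nabla f^*(A(x))$ wherever $f^*$ is differentiable. Since Lemma~\ref{lemma:superlinearity} guarantees $\dom f^*=\R^N$, the function $\omega_\alpha$ is continuous on $\bar\Omega$, and Fenchel duality yields the calibration identity $f(\nabla\omega_\alpha)+f^*(A)=A\cdot\nabla\omega_\alpha$ pointwise. Using boundedness of $\phi$ I first pick $c_1\le\inf_{\partial\Omega}(\phi-\omega_\alpha)$, possibly shifted downward by an error constant $C_*$ to be tuned, so that the candidate sub-barrier $v:=\omega_\alpha+c_1$ satisfies $v\le\phi$ on $\partial\Omega$ in the trace sense.

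Next I test the minimality of $u_k$ against the competitor $w:=\max(u_k,v)\in\phi+W^{1,1}_0(\Omega)$, which coincides with $u_k$ outside $E:=\{u_k<v\}$. The resulting inequality reads $\int_E f_k(\nabla u_k)+gu_k\,dx\le\int_E f_k(\nabla v)+gv\,dx$. On the left, $f_k\ge f-1$ combined with the Fenchel inequality yields $f_k(\nabla u_k)\ge A\cdot\nabla u_k-f^*(A)-1$; on the right, $f_k\le f$ combined with the calibration identity at $\nabla v=\nabla f^*(A)$ yields $f_k(\nabla v)\le A\cdot\nabla v-f^*(A)$. Integration by parts, justified by $\psi:=(v-u_k)^+\in W^{1,1}_0(\Omega)$ and $\div A=\alpha$, converts $\int_E A\cdot\nabla(u_k-v)\,dx$ into $\alpha\int_E(v-u_k)\,dx$, so that after collecting terms I arrive at the pivotal inequality
\[
\int_E (\alpha-g)(v-u_k)\,dx\;\le\;|E|.
\]

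Promoting this integral inequality to a pointwise barrier is the main obstacle. In the single-Lagrangian case $f_k=f$ of \cite{FT,GT} the right-hand side is zero, and $\alpha-g\ge 0$ together with $v>u_k$ on $E$ forces $|E|=0$ immediately. Here the approximation error persists, so I would rerun the whole derivation with the shifted barriers $v-t$, $t\ge 0$ (still admissible as sub-barriers since $v-t\le\phi$ on $\partial\Omega$), obtaining the recursive level-set estimate $(\alpha-\|g\|_\infty)\delta\,|A_{t+\delta}|\le|A_t|$ for $A_t:=\{u_k<v-t\}$, hence a geometric decay $|A_{t+\delta}|\le |A_t|/2$ once $\delta\ge 2/(\alpha-\|g\|_\infty)$. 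Coupled with the global $L^1$-bound $\int_\Omega (v-u_k)^+\,dx\le |\Omega|/(\alpha-\|g\|_\infty)$ that comes from taking $t=0$, this produces exponential decay of $|A_t|$ at a rate depending only on $|\Omega|,\alpha,\|g\|_\infty$; absorbing the associated shift into the constant $C_*$ chosen in the first step yields a $k$-independent $c_1$ for which $u_k\ge\omega_\alpha+c_1$ a.e., and the upper bound follows by the symmetry recalled at the outset.
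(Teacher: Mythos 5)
Your setup is sound up to and including the pivotal inequality: the symmetry reduction, the calibration field $A(x)=\alpha(x-x_0)/N$ with $\mathrm{div}\,A=\alpha$, the competitor $\max(u_k,v)$, the Fenchel inequalities $f_k(\nabla u_k)\ge A\cdot\nabla u_k-f^*(A)-1$ and $f_k(\nabla v)\le A\cdot\nabla v-f^*(A)$, and the integration by parts all check out, giving $\int_E(\alpha-g)(v-u_k)\,dx\le|E|$. The gap is in the concluding iteration. Your recursion $(\alpha-\|g\|_\infty)\,\delta\,|A_{t+\delta}|\le|A_t|$ yields at best geometric decay, $|A_{t}|\le 2^{-t/\delta}|\Omega|$, and geometric decay of $|A_t|$ does \emph{not} imply $|A_T|=0$ for any finite $T$ (take $|A_t|=2^{-t/\delta}$). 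Hence no finite downward shift of the barrier can be ``absorbed into $C_*$'', and the a.e.\ bound $u_k\ge\omega_\alpha+c_1$ does not follow; all your estimate shows is that $u_k$ exceeds $v-t$ off a set of small measure, which is no pointwise information at all. Equivalently, with $\Theta(t)=\int_t^\infty|A_s|\,ds$ your inequality has the form $\Theta\le -C\,\Theta'$, whereas finite-time extinction in the Hartman--Stampacchia style (used in Section 3 of the paper, via \cite{HS}) requires a superlinear relation $\Theta\le C(-\Theta')^{1+\gamma}$, $\gamma>0$, which your scheme has no Sobolev-type mechanism to produce. A secondary defect: the hypothesis is only $\|g\|_\infty\le\alpha$, so $\alpha-\|g\|_\infty$ may vanish and even the geometric rate degenerates.

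The error term $|E|$ you are trying to beat is an artifact of calibrating with $f^*$ while minimizing $f_k$, and the paper's proof removes it at the source rather than fighting it. It introduces the barriers $\omega_\alpha^k=\frac{N}{\alpha}f_k^*\bigl(\alpha\frac{\cdot-x_0}{N}\bigr)$ built from the polar of $f_k$ itself, for which the comparison principle of Theorem 2.4 in \cite{FT} applies exactly (the right-hand side of your pivotal inequality becomes $0$). Uniformity in $k$ then comes for free from the monotonicity of the Legendre transform: $f-1\le f_k\le f$ gives $f^*\le f_k^*\le(f-1)^*=f^*+1$, hence $\omega_\alpha\le\omega_\alpha^k\le\omega_\alpha+N/\alpha$, and the choice $c_1=\inf_{\partial\Omega}\{\phi-\tilde\omega_\alpha\}$ with $\tilde\omega_\alpha=\omega_\alpha+N/\alpha$ works for every $k$. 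If you want your argument to be self-contained, rerun your calibration with $\omega_\alpha^k$ in place of $\omega_\alpha$ and then squeeze; as written, the proof does not close.
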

\begin{proof}
Let us define for every $k\in \N$ and for a fixed $x_0\in \R^N$ the following functions
\begin{eqnarray}
& \displaystyle
\om_{\alpha}^k(x):=\frac{N}{\alpha} f_k^*\Big{(}\alpha\frac{x-x_0}{N}\Big{)}
\end{eqnarray}
and
\begin{eqnarray}
& \displaystyle
\tilde \om_{\alpha}(x):=\frac{N}{\alpha} (f-1)^*\Big{(}\alpha\frac{x-x_0}{N}\Big{)}. 
\end{eqnarray}
 It holds that For every $k\in \N$and for every $x\in \R^N$, it holds that 
\begin{eqnarray}\label{relazioniomega}
    \om_{\alpha}(x)\le \om_{\alpha}^k(x)\le \tilde\om_{\alpha}(x).
    \end{eqnarray}
Set $c_1=\inf_{\partial \Omega}\{\phi-\tilde\om_{\alpha}\}$, then for every $k\in \N$, we obviously have
$$\om_{\alpha}^k(x)+c_1\le \phi(x) \quad{\rm on } \, \partial \Omega$$  and hence
Theorem 2.4 in \cite{FT} yields for every $k\in \N$
$$\om_{\alpha}^k(x)+c_1\le u_k(x) \quad {\rm a.e \,\, in}\,  \Omega.$$ Hence the first inequality at \eqref{tesi:confronto} follows from \eqref{relazioniomega}. An analogous argument gives the existence of $c_2$ such that
$\om_{-\alpha}(x)+c_2\ge u_k(x)$ a.e. in $\Omega$.
\end{proof}
With the result in Theorem \ref{thm:comparison1} in our hands, we are in position to construct the barriers for every minimizer in the desired class. More precisely, we can use a one-sided version of Theorem 4.5 in \cite{GT}. We first need to recall  the following

\begin{definition}\label{Runiforme}
An  open bounded subset $\Omega$ of $\R^N$ is  $R$-{\it uniformly convex}, $R>0$, if for every $\gamma\in\partial\Om$
there exists a vector $b_{\gamma}\in \R^N$, with $|b_{\gamma}|=1$, such that
\begin{eqnarray}
& \displaystyle R\, b_{\gamma}\cdot(\gamma'-\gamma)\geq
\frac12|\gamma'-\gamma|^2, \quad \forall \gamma'\in \partial \Omega.\label{unif-cvx}
\end{eqnarray}
\end{definition}

\begin{theorem}\label{thm:barriere}
  Assume that $f\colon \R^N\to \R$ is convex and satisfies hypotheses {\rm (F1)-(F3)}.  Let $\Omega$ be an open bounded $R$-uniformly convex set and $g\colon \Omega\to \R$ be a measurable bounded function.

\noindent For every fixed function $\phi:\Omega\rightarrow\R$ satisfying the (LBSC),  there exists
   $\ell :\overline\Omega\rightarrow\R$,
Lipschitz of rank $L=L(R,f,\phi)$, such that
\[\ell(\gamma)=\phi(\gamma)\quad\textrm{ for every }\gamma
\in\partial\Omega\] and
\[\ell(x)\le u(x)\]
for almost every $x
\in\Omega{\BBB }$ and for every minimum $u$ in $\phi+W_0^{1,1}(\Omega)$.
\end{theorem}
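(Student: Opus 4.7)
The plan is to adapt the two-sided barrier construction of Theorem 4.5 in \cite{GT} to the one-sided LBSC setting. Concretely, I aim to produce, for each $\gamma\in\partial\Omega$, a Lipschitz function $\ell_\gamma:\overline\Omega\to\R$ satisfying (a) $\ell_\gamma(\gamma)=\phi(\gamma)$, (b) $\ell_\gamma\le\phi$ on $\partial\Omega$, (c) $\ell_\gamma\le u$ almost everywhere in $\Omega$ for every minimizer $u\in\phi+W^{1,1}_0(\Omega)$, and (d) Lipschitz rank $L=L(R,f,\phi)$ independent of $\gamma$; the lower barrier is then $\ell(x):=\sup_{\gamma\in\partial\Omega}\ell_\gamma(x)$. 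Properties (a)--(b) applied over varying $\gamma$ force $\ell=\phi$ on $\partial\Omega$, (c) gives $\ell\le u$ a.e.\ in $\Omega$, and a supremum of functions sharing a common Lipschitz rank is itself Lipschitz with the same rank.

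Each $\ell_\gamma$ would be built by combining three available ingredients. First, the LBSC affine minorant $a_\gamma(x):=\phi(\gamma)+z_\gamma\cdot(x-\gamma)$, with $|z_\gamma|\le M$ and $a_\gamma\le\phi$ on $\partial\Omega$ with equality at $\gamma$. Second, the uniform-convexity profile $x\mapsto R\,b_\gamma\cdot(x-\gamma)-\tfrac12|x-\gamma|^2$, which by Definition \ref{Runiforme} is non-negative on $\overline\Omega$ and vanishes at $\gamma$. Third, the auxiliary function $\omega_\alpha^{x_0}$ of \eqref{ausiliarie} with $\alpha=\|g\|_\infty$, well defined on $\R^N$ since $\mathrm{dom}\,f^*=\R^N$ by Lemma \ref{lemma:superlinearity} (the superlinearity of $f$ being forced by (F1)--(F3)). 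Following \cite{GT}, I would define $\ell_\gamma$ as a combination of these three ingredients---an affine piece pinned to $\phi$ at $\gamma$ via LBSC, a non-negative quadratic correction coming from the $R$-uniform convexity, and an $\omega_\alpha^{x_0(\gamma)}$-correction encoding the effect of the lower-order term $gu$---with the free parameters calibrated so that (a) and (b) hold simultaneously. Property (c) then follows from the comparison principle Theorem \ref{thm:comparison1} applied with $f_k\equiv f$ and $\alpha=\|g\|_\infty$: the a.e.\ inequality $\omega_\alpha^{x_0(\gamma)}+c_1\le u$ with $c_1=\inf_{\partial\Omega}(\phi-\tilde\omega_\alpha^{x_0(\gamma)})$, combined with the calibration, yields $\ell_\gamma\le u$.

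Property (d) is obtained by controlling each ingredient uniformly in $\gamma$: the affine part by $M$; the quadratic profile by $R$ and $\diam\Omega$; and the $\omega_\alpha^{x_0(\gamma)}$ piece by the fact that the calibration keeps the centers $\{x_0(\gamma):\gamma\in\partial\Omega\}$ in a bounded subset of $\R^N$ determined by $R$, $\diam\Omega$, $M$, and $f$, together with the local Lipschitz bound on $\nabla f^*$ on bounded sets. The main obstacle is the joint calibration of the free parameters in the definition of $\ell_\gamma$: they must simultaneously enforce tangency to $\phi$ at $\gamma$, preserve $\ell_\gamma\le\phi$ throughout $\partial\Omega$, and keep $\ell_\gamma$ below the comparison-principle lower bound in $\Omega$. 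This is precisely where the one-sided LBSC and the $R$-uniform convexity of $\Omega$ interact essentially---the latter supplies the quantitative second-order geometry that absorbs the lack of an upper-slope condition in the former---and the argument mirrors, with the appropriate simplifications, the two-sided calibration of \cite{GT}.
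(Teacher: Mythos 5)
Your high-level strategy matches what the paper does: the paper itself does not give a self-contained argument but simply invokes a one-sided version of Theorem~4.5 of~\cite{GT}, together with Theorem~\ref{thm:comparison1}, and the construction underlying that theorem is indeed a supremum over $\gamma\in\partial\Omega$ of per-point lower barriers built from the Cellina functions~\eqref{ausiliarie}, calibrated via the LBSC vector $z_\gamma$ and the uniform-convexity data $(b_\gamma,R)$. So the skeleton you propose (build $\ell_\gamma$ for each $\gamma$, verify (a)--(d), take $\sup_\gamma$) is the right one and the bookkeeping at the end (the supremum equals $\phi$ on $\partial\Omega$, inherits the uniform rank, and stays below $u$) is correct.

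There is, however, a genuine gap in the description of $\ell_\gamma$. You describe it as a ``combination'' of three additive ingredients: an affine piece $a_\gamma(x)=\phi(\gamma)+z_\gamma\cdot(x-\gamma)$, a quadratic correction $R\,b_\gamma\cdot(x-\gamma)-\tfrac12|x-\gamma|^2$, and an $\omega_\alpha^{x_0(\gamma)}$-term. But the only tool available to prove $\ell_\gamma\le u$ a.e.\ is Theorem~\ref{thm:comparison1}, and that comparison applies precisely to vertical translates $\omega_\alpha^{x_0}+c$ of the Cellina function, not to a generic sum of an affine function, a concave paraboloid, and $\omega_\alpha$. If $\ell_\gamma$ is such a sum, then the step ``the a.e.\ inequality $\omega_\alpha^{x_0(\gamma)}+c_1\le u$ \dots, combined with the calibration, yields $\ell_\gamma\le u$'' requires $\ell_\gamma\le\omega_\alpha^{x_0(\gamma)}+c_1$ pointwise \emph{and} $\ell_\gamma(\gamma)=\phi(\gamma)$; together these force $\omega_\alpha^{x_0(\gamma)}(\gamma)+c_1=\phi(\gamma)$, which means the extra affine and quadratic summands contribute nothing except to push $\ell_\gamma$ below the translate and destroy Lipschitz control. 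The correct structure (the one used in~\cite{GT},~\cite{FT},~\cite{C3}) is $\ell_\gamma=\omega_\alpha^{x_0(\gamma)}+c(\gamma)$ exactly, with $c(\gamma)=\phi(\gamma)-\omega_\alpha^{x_0(\gamma)}(\gamma)$, and with $z_\gamma$, $b_\gamma$, and $R$ absorbed into the \emph{choice of center} $x_0(\gamma)$ (schematically $x_0(\gamma)=\gamma+R\,b_\gamma-\tfrac{N}{\alpha}\nabla f(z_\gamma)$), so that the condition
\[
\omega_\alpha^{x_0(\gamma)}(\gamma')-\omega_\alpha^{x_0(\gamma)}(\gamma)\le z_\gamma\cdot(\gamma'-\gamma)\quad\forall\gamma'\in\partial\Omega
\]
holds; this is what makes $\ell_\gamma\le a_\gamma\le\phi$ on $\partial\Omega$ with equality at $\gamma$, at which point Theorem~\ref{thm:comparison1} gives $\ell_\gamma\le u$ directly. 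Verifying the displayed inequality from \eqref{unif-cvx} and the monotonicity of $\nabla f^*$ is exactly the ``joint calibration'' you flag as the main obstacle; you have identified the right inputs, but the proposal neither writes down the correct candidate $\ell_\gamma$ nor carries out that verification, which is the substance of the theorem.
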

We conclude the section proving a useful uniform approximation result on $\R^N$. 
\begin{theorem}\label{approximation}
Let $f:\R^N\to \R$ be a convex function such that, for some $\mu>0$, satisfies 
\begin{eqnarray}\label{uc}f(\theta\xi+(1-\theta)\zeta)\le\theta f(\xi)+(1-\theta)f(\zeta)-\frac{\mu}{2} \theta(1-\theta)|\xi-\zeta|^2
\end{eqnarray}
 for every $\xi, \zeta\in \R^N$  and for every $\theta\in [0,1]$. There exists a sequence of $C^{\infty}$ functions satysfing a  condition analogous to \eqref{uc} converging to $f$ uniformly on $\R^N$.
\end{theorem}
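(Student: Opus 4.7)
My strategy is to decompose $f$ into a strongly convex quadratic plus a general convex part, mollify the latter, and then address the uniform convergence on the whole of $\R^N$ by passing to the Legendre dual.

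Set $h(\xi) := f(\xi) - \tfrac{\mu}{2}|\xi|^2$. The identity $\theta|\xi|^2 + (1-\theta)|\zeta|^2 - |\theta\xi + (1-\theta)\zeta|^2 = \theta(1-\theta)|\xi - \zeta|^2$ shows that hypothesis \eqref{uc} is equivalent to convexity of $h$. Since $f$ is real-valued on $\R^N$, so is $h$, and hence $h$ is continuous and locally Lipschitz on $\R^N$. It then suffices to exhibit $C^\infty$ convex approximations of $h$ uniformly on $\R^N$: if $h_\epsilon \in C^\infty$ is convex with $\|h_\epsilon - h\|_\infty \to 0$, then $f_\epsilon := h_\epsilon + \tfrac{\mu}{2}|\cdot|^2$ is $C^\infty$, satisfies an analog of \eqref{uc} with the \emph{same} constant $\mu$, and converges uniformly to $f$.

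The natural smoothing is convolution mollification: let $\rho_\epsilon$ be a symmetric nonnegative $C_c^\infty$ mollifier of scale $\epsilon$, and set $h_\epsilon := h * \rho_\epsilon$. Since the kernel is nonnegative, $h_\epsilon \in C^\infty(\R^N)$ is convex, and the local Lipschitz continuity of $h$ yields $h_\epsilon \to h$ locally uniformly. The main obstacle is to upgrade this to uniform convergence on all of $\R^N$: the local Lipschitz constant of $h$ may be unbounded at infinity (for instance if $h(\xi) \sim |\xi|^q$ with $q>1$), so the naive estimate $|h_\epsilon(\xi) - h(\xi)| \le \epsilon \, \Lip(h; B(\xi, \epsilon))$ breaks down.

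To overcome this, I would pass to the Legendre dual. By Lemma~\ref{lemma:superlinearity} and the (at least) quadratic growth forced by \eqref{uc}, one has $\dom f^* = \R^N$; moreover the classical characterization of strong convexity yields $f^* \in C^{1,1}(\R^N)$ with $\nabla f^*$ globally $\mu^{-1}$-Lipschitz. Mollify the dual instead: $g_\epsilon := f^* * \rho_\epsilon$. Symmetry of $\rho_\epsilon$ combined with a second-order Taylor estimate based on the global Lipschitz bound on $\nabla f^*$ gives the \emph{uniform} bound
\[
\|g_\epsilon - f^*\|_{L^\infty(\R^N)} \le \frac{\epsilon^2}{2\mu}.
\]
Setting $f_\epsilon := g_\epsilon^*$ and applying the order-reversing Legendre transform to the two-sided inequality $f^* - \epsilon^2/(2\mu) \le g_\epsilon \le f^* + \epsilon^2/(2\mu)$, together with $f = f^{**}$ (which holds since $f$ is convex and lower semicontinuous), transfers the uniform bound to $\|f_\epsilon - f\|_{L^\infty(\R^N)} \le \epsilon^2/(2\mu)$. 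The $\mu$-strong convexity of $f_\epsilon$ is inherited from the $\mu^{-1}$-Lipschitz gradient of $g_\epsilon$, and $C^\infty$ regularity of $f_\epsilon$ follows once $g_\epsilon$ is strictly convex with positive-definite Hessian; this last point, which I expect to be the subtlest, can be arranged by performing the mollification on a pre-regularized strictly convex $C^\infty$ proxy of $f^*$ (obtained by an additional mollification or a small strictly convex correction), at the cost of an arbitrarily small additional uniform error.
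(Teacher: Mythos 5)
Your strategy is genuinely different from the paper's. The paper starts from a nondecreasing sequence of smooth convex approximants converging locally uniformly (Lemma~7.4 of \cite{BB}) and glues them into a global uniform approximation using Azagra's smooth-maximum patching \cite{Azagra}; you instead dualize, exploit that $\mu$-strong convexity of $f$ becomes $\mu^{-1}$-Lipschitz continuity of $\nabla f^*$, mollify $f^*$ with a uniform $O(\epsilon^2/\mu)$ error coming from the second-order Taylor bound, and dualize back. The order-reversing transfer of the two-sided uniform bound (via $(\phi\pm c)^* = \phi^*\mp c$ and $f=f^{**}$) is correct, as is the persistence of $\mu$-strong convexity through duality, and your diagnosis of why naive mollification of $h=f-\tfrac{\mu}{2}|\cdot|^2$ fails is also right. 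As a method this is cleaner and more conceptual than the paper's patching argument.

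The genuine gap is in the final smoothness step, and the fix you propose does not work. For $f_\epsilon=g_\epsilon^*$ to be $C^\infty$ you need $\nabla^2 g_\epsilon>0$ everywhere, and a compactly supported mollifier will not give this: $f^*$ can be affine along segments (precisely where $f$ fails to be differentiable --- take $f(\xi)=\tfrac{\mu}{2}|\xi|^2+|\xi_1|$, whose conjugate is flat in $\eta_1$ on $|\eta_1|<1$), and the mollified $g_\epsilon$ then keeps flat directions. Your proposed repair --- ``a small strictly convex correction\ldots at the cost of an arbitrarily small additional uniform error'' --- is impossible as stated: a nonconstant convex function on $\R^N$ is unbounded, so any additive strictly convex perturbation $\delta q$ of $g_\epsilon$ destroys the $L^\infty$ bound; equivalently, in the primal the perturbation appears as an inf-convolution $g_\epsilon^*\,\square\,(\delta q)^*$, and since $g_\epsilon^*$ is superlinear the resulting error grows without bound. ``Additional mollification'' likewise cannot remove flat directions. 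What does close the argument is to mollify $f^*$ against a kernel of \emph{full support} (e.g.\ a Gaussian of small variance $\sigma$): the Taylor bound still gives a uniform $O(\sigma^2/\mu)$ error, while superlinearity of $f^*$ forces $v^T\nabla^2f^*v>0$ on a positive-measure set for every direction $v$ (otherwise, integrating the $C^{1,1}$ function $f^*$ along lines in direction $v$ would make it globally affine in that direction, contradicting superlinearity), so the full-support convolution yields $\nabla^2g_\epsilon>0$ everywhere. With this replacement your approach is sound; with the fix as written, it is not.
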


\begin{proof}
 In order to prove our aim, it will be sufficient to prove that for every $\epsilon>0$, there exists a $C^{\infty}$ convex function $g$ such that
$$f-2\epsilon\le g\le f$$ and satisfying \eqref{uc} with $\mu'$ in place of $\mu$.
 We shall follow the ideas in Theorem 1.2 of \cite{Azagra}.
 
Let us start considering the  nondecreasing sequence $\{f_k\}_k$ of convex, smooth
functions  converging uniformly  to $f$  on compact sets constructed  in Lemma 7.4 of \cite{BB}. Such functions satisfy the property in \eqref{uc} for a positive constant $\mu'<\mu$.

\noindent Let us choose $k_n\in \N$ sufficiently large to have  that the  function $f_{k_n}$ satisfies 
$$f_{k_n}\ge f-\frac{\epsilon}{2^{n+1}} \quad {\rm on \,the\, ball}\, B_n.$$  We obviously have that
$$f_{k_n}\le f  \quad {\rm on } \, \R^N.$$
Setting
$$h_n=f_{k_n}-\sum_{k=0}^{n-1}\frac{\epsilon}{2^{k}}-\frac{\epsilon}{2^{n+1}},$$
we can check that
$$h_n\ge f-\sum_{k=0}^{n}\frac{\epsilon}{2^{k}} \quad {\rm on} \quad B_n$$
and
$$h_n\le f-\sum_{k=0}^{n-1}\frac{\epsilon}{2^{k}}-\frac{\epsilon}{2^{n+1}}\quad {\rm on} \quad \R^N.$$

\noindent As in the proof of Theorem 1.2 in \cite{Azagra}, we define $g_1=h_1$ and get, for every $n\ge 2$, the existence of a $C^{\infty}$ convex function $g_n$ 
   such that 
$$\max\{g_{n-1},h_n\}\le g_n\le \max\{g_{n-1},h_n\} +\frac{\epsilon}{10^n} \quad {\rm on } \quad \R^N$$
Moreover, reasoning similarly as in the proof of Proposition 2.2 (ix)  in \cite{Azagra}, we can deduce that $g_n$ satisfies the property in \eqref{uc}   with  $\mu'$ in place of $\mu$.
Finally, going in the deep of the proof of Theorem 1.2 in \cite{Azagra}, we verify that
$$g_n=g_{n+1} \quad {\rm on} \quad B_n$$
and
$$f-\sum_{k=0}^{n}\frac{\epsilon}{2^k}\le g_n\le f-\frac{\epsilon}{2}+ \sum_{k=2}^{n}\frac{\epsilon}{10^k}\quad {\rm on} \quad B_n.$$
 Therefore the pointwise limit of the sequence $\{g_n\}_n$ is a $C^{\infty}$ function $g$ such that
$$f-2\epsilon\le g\le f$$ 
and satisfying the property \eqref{uc} with  $\mu'$ in place of $\mu$.

\end{proof}

In order to face the nonconvex case in Section \ref{nccase}, we  recall that  the convexification $f^{**}$ of a function $f\colon \R^N\to \bar\R$ is the greatest  convex function which is less than $f$ on $\R^N$.

\section{Proof of Theorem \ref{aprioriestimate}}

This section is devoted to the proof of Theorem \ref{aprioriestimate}. The inequality  \eqref{estimateapp} will be used as an apriori estimate in the proof of Theorem \ref{mainth}. 

\begin{proof}
 We start observing that the  minimizer in the class $\phi+W^{1,1}_0(\Omega)$ of the functional \eqref{funzconv} is unique, locally Lipschitz and belonging to $C^2(\Omega)$  (see Theorem 8.1 in \cite{St} and Theorem 3.1 in \cite{BC}).

\noindent Consider $\ell(x)$ the 
 convex  and Lipschitz function of rank  $L$ given by Theorem \ref{thm:barriere}, then we get  
 $$ u(x)\ge \ell(x)\ge K\cdot (x-\gamma)+\phi (\gamma) \quad \forall x\in \Omega, \forall \gamma \in \partial \Omega$$
 where $K\in \partial \ell(\gamma), |K|\le L$. As in \cite{BC}, consider $\lambda\in [1/2,1)$ and for a  fixed  point $z\in \partial \Omega$, define $$\Omega_{\lambda}:= \lambda (\Omega-z)+z$$
and
$$ u_{\lambda}(y):= \lambda  u((y-z)/\lambda+z).$$
Then $ u_{\lambda}\in \phi_{\lambda}+W^{1,1}_0(\Omega_{\lambda})\cap C^2(\Omega_{\lambda})$ with
$$\phi_{\lambda}(y):= \lambda \phi((y-z)/\lambda+z).$$
Arguing as in the proof of Lemma 2.6 in \cite{Cl}, we set $\bar q:=  L \diam \Omega+ \|\phi\|_{L^{\infty}(\partial \Omega)}$ and have for $q>\bar q+1$
\begin{eqnarray}\label{relsull}
  u_{\lambda}\le  u +\left(q-\frac{1}{n}\right)(1-\lambda)\,\text{on }\partial\Omega_{\lambda}. \end{eqnarray}
  Let us  prove that,  for a $\lambda$ sufficiently close to $1$ and $q$ sufficiently large, it is also 
\begin{eqnarray}\label{relsullafrontiera}
  u_{\lambda}\le  u+\left(q-\frac{1}{n}\right)(1-\lambda) \,\, \text{a.e. in }\Omega_{\lambda}. \end{eqnarray}

\noindent To this aim, consider the set $S_{\lambda,q, n}=\{y\in \Omega_\lambda:  u_{\lambda}(y)> 
 u(y)+\left(q-\frac{1}{n}\right)(1-\lambda)\}$ and prove that its measure is equal to $0$ independentently on $\lambda$ sufficiently close to $1$ and $q$ sufficiently large.

\noindent Let  $w:=\min \{ u+\left(q-\frac{1}{n}\right)(1-\lambda), u_{\lambda}\}$ and observe that, thanks to \eqref{relsull}, $w\in \phi_{\lambda}+W^{1,1}_0(\Omega_{\lambda})\cap C^2(\Omega_{\lambda})$ so that
$$w^{\lambda}(x):=\frac{1}{\lambda}w(\lambda (x-z)+z)$$
belongs to $\phi+W^{1,1}_0(\Omega)\cap C^2(\Omega)$. Using now the minimality of $u$ we can easily deduce that $$\mathcal{J}\left(u\right)\le \mathcal{J}(v)$$ with
$v:=\theta w^{\lambda}+(1-\theta) u\in \phi+W^{1,1}_0(\Omega)\cap C^2(\Omega)$, $\theta\in [0,1]$,  that is
\begin{align}
&\int_{\Omega}f_1(\nabla  u)+f_2(\nabla u)+g(x)  u(x)\, dx\\ &\le \int_{\Omega}f_1(\theta \nabla w^{\lambda}+(1-\theta)\nabla   u)+f_2(\theta \nabla w^{\lambda}+(1-\theta)\nabla   u)+g(x) (\theta  w^{\lambda}+(1-\theta)  u)\, dx
\end{align}
and with a change of variable
\begin{eqnarray*}
&&\int_{\Omega_\lambda}f_1(\nabla  u_{\lambda})+f_2(\nabla  u_{\lambda})+g((y-z)/\lambda+z)  u((y-z)/\lambda+z) 
\, dy\cr\cr
&&\le \int_{\Omega_\lambda}f_1(\theta \nabla w+(1-\theta)\nabla  u_{\lambda})+f_2(\theta \nabla w+(1-\theta)\nabla  u_{\lambda})\cr\cr&&+g((y-z)/\lambda+z)) (\theta  w^\lambda((y-z)/\lambda+z)) +(1-\theta)   u((y-z)/\lambda+z))\, dy\cr\cr&&
\le \int_{\Omega_\lambda}f_1(\theta \nabla w+(1-\theta)\nabla  
 u_{\lambda})+f_2(\theta \nabla w+(1-\theta)\nabla  
 u_{\lambda})\cr\cr&&+g((y-z)/\lambda+z)) \left(\theta  \frac{w}{\lambda} +(1-\theta) \frac{ 
 u_{\lambda}}{\lambda}\right) \, dy.
\end{eqnarray*}
Observing that $w= u_{\lambda}$   on $\Omega_\lambda\setminus S_{\lambda,q,n}$ and $w=  u+\left(q-\frac{1}{n}\right)(1-\lambda)$ on $S_{\lambda,q,n}$, previous inequality gives

\begin{eqnarray}
&&\int_{S_{\lambda,q,n}}f_1(\nabla  u_{\lambda})+f_2(\nabla  u_{\lambda})+g((y-z)/\lambda+z)  \frac{u_{\lambda}}{\lambda}\, dy\cr\cr
&&\le \int_{S_{\lambda,q,n}}f_1(\theta \nabla u+(1-\theta)\nabla  
 u_{\lambda})+f_2(\theta \nabla u+(1-\theta)\nabla  
 u_{\lambda})\cr\cr&&+g((y-z)/\lambda+z)) \left(\theta  \frac{u+\left(q-\frac{1}{n}\right)(1-\lambda)}{\lambda} +(1-\theta) \frac{ 
 u_{\lambda}}{\lambda}\right) \, dy.
\end{eqnarray}
and therefore
\begin{align}\label{prima}
&\int_{S_{\lambda,q,n}}\!\!\!\!\!f_1(\nabla  u_{\lambda})-f_1(\theta \nabla u+(1-\theta)\nabla  
 u_{\lambda})\\&+f_2(\nabla  u_{\lambda})-f_2(\theta \nabla u+(1-\theta)\nabla  
 u_{\lambda})\, dy\\
&\le \int_{S_{\lambda,q,n}}\!\!\!\!\!g((y-z)/\lambda+z)) \left(\theta  \frac{u}{\lambda} +(1-\theta) \frac{ 
 u_{\lambda}}{\lambda}\right)-g((y-z)/\lambda+z)  \frac{u_{\lambda}}{\lambda} \\&+g((y-z)/\lambda+z))\theta  \frac{\left(q-\frac{1}{n}\right)(1-\lambda)}{\lambda}\, dy
 \\&=
 \int_{S_{\lambda,q,n}}\!\!\!\!\!g((y-z)/\lambda+z)) \frac{\theta}{\lambda}\left( u - 
 u_{\lambda}\right)\\& +g((y-z)/\lambda+z))\theta  \frac{\left(q-\frac{1}{n}\right)(1-\lambda)}{\lambda}\, dy
\end{align}
Now consider 
 $W:=\max \{ u, u_{\lambda}-\left(q-\frac{1}{n}\right)(1-\lambda)\}$. {We can} observe that $W= u$ on $\Omega \setminus S_{\lambda,q,n}$ and that $W\in \phi+W^{1,1}_0(\Omega)\cap C^2(\Omega)$ thanks to \eqref{relsull}.
 For $\theta\in[0,1]$ and 
 $V:=\theta W+(1-\theta) u\in \phi+W^{1,1}_0(\Omega)\cap C^2(\Omega)$, using the minimality of $  u$,   we have $\mathcal{J}( u)\le \mathcal{J}(V)$ that is 
 \begin{eqnarray*}
&&\int_{\Omega}f_1(\nabla  u)+f_2(\nabla  u)+g(x)  u(x)\, dx \le \int_{\Omega}f_1(\theta \nabla W+(1-\theta)\nabla   u)\cr\cr &&+f_2(\theta \nabla W+(1-\theta)\nabla   u)+g(x) (\theta  W+(1-\theta)  u)\, dx
\end{eqnarray*}
and hence
\begin{eqnarray}\label{seconda}
&&\int_{S_{\lambda,q,n}}f_1(\nabla  u)-f_1(\theta \nabla u_{\lambda}+(1-\theta)\nabla  u)+ \cr\cr
&&+f_2(\nabla  u)-f_2(\theta \nabla u_{\lambda}+(1-\theta)\nabla  u)\, dy\cr\cr
&&\le
\int_{S_{\lambda,q,n}}g(y) \left(\theta   u_{\lambda} +(1-\theta)  u\right)-g(y)  u(y) \cr\cr&&
-g(y)\theta \left(q-\frac{1}{n}\right)(1-\lambda)\, dy\cr\cr&&
=
\int_{S_{\lambda,q,n}}g(y) \theta  ( u_{\lambda} -u(y)) -g(y)\theta \left(q-\frac{1}{n}\right)(1-\lambda)\, dy
\end{eqnarray}
Now, summing up  inequalities \eqref{prima} and \eqref{seconda}, we get
 \begin{eqnarray}
&&\int_{S_{\lambda,q,n}}(1-\theta)f_1(\nabla  u_{\lambda})+\theta f_1(\nabla  u)-f_1(\theta \nabla  u+(1-\theta) \nabla  u_{\lambda})\cr\cr&&+\theta f_1(\nabla  u_{\lambda})+(1-\theta)f_1(\nabla  u)-f_1(\theta \nabla  u_{\lambda}+
(1-\theta)
\nabla  u)\,dy\cr\cr&&+
\int_{S_{\lambda,q,n}}(1-\theta)f_2(\nabla  u_{\lambda})+\theta f_2(\nabla  u)-f_2(\theta \nabla  u+(1-\theta) \nabla  u_{\lambda})\cr\cr&&+\theta f_2(\nabla  u_{\lambda})+(1-\theta)f_2(\nabla  u)-f_2(\theta \nabla  u_{\lambda}+
(1-\theta)
\nabla  u)\,dy
\cr\cr&&
\le \int_{S_{\lambda,q,n}}\theta\left[\frac{1}{\lambda} g\left(\frac{y-z}{\lambda}+z\right)-g(y)\right](u-u_{\lambda})\,dy\cr\cr&&
+ \int_{S_{\lambda,q,n}}\theta\left[\frac{1}{\lambda} g\left(\frac{y-z}{\lambda}+z\right)-g(y)\right]\left(q-\frac{1}{n}\right)(1-\lambda)\, dy
\end{eqnarray}
and, observing that the second integral in the left hand side is positive by  convexity, we get
\begin{eqnarray*}
&&\int_{S_{\lambda,q,n}}(1-\theta)f_1(\nabla  u_{ \lambda})+\theta f_1(\nabla  u)-f_1(\theta \nabla  u+(1-\theta) \nabla  u_{\lambda})\cr\cr&&+\theta f_1(\nabla  u_{\lambda})+(1-\theta)f_1(\nabla  u)-f_1(\theta \nabla  u_{\lambda}+
(1-\theta) \nabla  u)\, dy\cr\cr&&
\le \int_{S_{\lambda,q,n}}\theta\left[\frac{1}{\lambda} g\left(\frac{y-z}{\lambda}+z\right)-g(y)\right]\left(u+\left(q-\frac{1}{n}\right)(1-\lambda)-u_{\lambda}\right)\,dy.
\end{eqnarray*}
Define now 
$$S_{\lambda,q, n}(r)\!\!=\!\!\{x\in \Omega_\lambda:  u_{\lambda}(x)> 
 u(x)+\left(q-\frac{1}{n}\right)(1-\lambda), |\nabla u|>r+1 \}.$$
Using  {assumption (F3)} to estimate from below the left hand side of previous inequality and dividing for $\theta$, we  have
\begin{eqnarray*}
&&{\epsilon\, (1-\theta)} \int_{S_{\lambda,q,n}(r)}|\nabla  u_{\lambda}-\nabla  u|^2dy \cr\cr&&\,\,\,\,\,\,\le
 \!\!\int_{S_{\lambda,q,n}}\left(\frac{1}{\lambda}g\left(\frac{y-z}{\lambda}+z\right)-g(y)\right)\left(u+\left(q-\frac{1}{n}\right)(1-\lambda)-u_{\lambda}\right)\, dy
\end{eqnarray*}
{and hence, letting $\theta\to 0$},

\begin{eqnarray}\label{finale}
&&{\epsilon} \int_{S_{\lambda,q,n}(r)}|\nabla  u_{\lambda}-\nabla  u|^2dy \cr\cr&&\,\,\,\,\,\,\le
\!\!\int_{S_{\lambda,q,n}}\left(\frac{1}{\lambda}g\left(\frac{y-z}{\lambda}+z\right)-g(y)\right)\left(u+\left(q-\frac{1}{n}\right)(1-\lambda)-u_{\lambda}\right)\, dy
\end{eqnarray}

In order to pass to polar coordinates and to simplify the calculations, we first set $u+\left(q-\frac{1}{n}\right)(1-\lambda)-u_{\lambda}=0$ outside $S_{\lambda,q,n}$ and reason on  the following integral
\begin{eqnarray*}
\int_{\Omega}\left(\frac{1}{\lambda}g\left(\frac{y-z}{\lambda}+z\right)-g(y)\right)\left(u+\left(q-\frac{1}{n}\right)(1-\lambda)-u_{\lambda}\right)^-\, dy.
\end{eqnarray*}
Consider a change of variable centered in $z$
such that $\rho=|y-z|$, $y=\rho h(\theta)$.
Hence 
\begin{align}\label{cambiovar}
&\int_{\Omega}\left(\frac{1}{\lambda}g\left(\frac{y-z}{\lambda}+z\right)-g(y)\right)\left(u+\left(q-\frac{1}{n}\right)(1-\lambda)-u_{\lambda}\right)^-\, dy\\
&=\int_0^{\pi}\d\theta \int_{0}^{\rho(\theta)}
\left(\frac{1}{\lambda}g\left(h(\theta)\left(\rho+\frac{1-\lambda}{\lambda}\rho\right)\right)-g(\rho h(\theta))\right)\cdot \\&\left(\!\!u(\rho h(\theta))\!\!+\!\!\left(q-\frac{1}{n}\right)(1-\lambda)-\lambda u\left(h(\theta)(\rho+\frac{1-\lambda}{\lambda}\rho)\right)\right)^-\!\!\!\!\rho^{N-1}|J_{h}|\, d\rho.
\end{align}
where $J_{h}$ is the jacobian determinant of $h$. 
Setting 
$$H(\rho)=\int_{\rho}^{\rho+\frac{1-\lambda}{\lambda}\rho}g(sh(\theta))\ ds$$ 
and 
$$G(\rho)=H(\rho)\rho^{N-1}-(N-1)\int_0^\rho \rho^{N-2}H(s)\, ds$$
so that
$$H'(\rho)=\frac{1}{\lambda}g\left(h(\theta)\left(\rho+\frac{1-\lambda}{\lambda}\rho\right)\right)-g(\rho h(\theta))$$
and 
$$G'(\rho)=H'(\rho)\rho^{N-1},$$
the integral in \eqref{cambiovar} becomes by integration by parts the following
\begin{eqnarray*}
\int_0^{\pi}\d\theta \int_{0}^{\rho(\theta)}G(\rho)\left[\nabla \left(u\left(h(\theta)\left(\rho+\frac{1-\lambda}{\lambda}\rho\right)\right) - u(\rho h(\theta))\right)^-\right]|J_h|\, d\rho.
\end{eqnarray*}
Recalling now inequality \eqref{finale}, we obtain
\begin{eqnarray*}
&&{\epsilon}\int_{S_{\lambda,q,n}(r)}|\nabla  u_{\lambda}-\nabla  u|^2dy \cr\cr&&\le
\int_0^{\pi}\d\theta \int_{0}^{\rho(\theta)}G(\rho)\left[\nabla \left(u \left(h(\theta)\left(\rho+\frac{1-\lambda}{\lambda}\rho\right)\right) - u(\rho h(\theta))\right)^-\right]|J_h|\, d\rho
\end{eqnarray*}
and dividing for $(1-\lambda)^2$, we have
\begin{eqnarray}\label{completa}
&&{\epsilon}\int_{S_{\lambda,q,n}(r)}\frac{|\nabla  u_{\lambda}-\nabla  u|^2}{(1-\lambda)^2}dy \cr\cr &&\le
\int_0^{\pi}\d\theta \int_{0}^{\rho(\theta)}\frac{G(\rho)}{(1-\lambda)}\frac{\left[\nabla \left(u \left(h(\theta)\left(\rho+\frac{1-\lambda}{\lambda}\rho\right)\right) - u(\rho h(\theta))\right)^-\right]}{(1-\lambda)}|J_h|\, d\rho.
\end{eqnarray}
If we observe that the integral in the left hand side can be written as
$$\int_{S_{\lambda,q,n}(r)}\frac{|\nabla u(\frac{y-z}{\lambda}+z)-\nabla u(y)|^2}{(1-\lambda)^2}\, dy=\int_{S_{\lambda,q,n}(r)}\frac{1}{\lambda^2}\frac{|\nabla u(y+\frac{1-\lambda}{\lambda}(y-z))-\nabla u(y)|^2}{\frac{(1-\lambda)^2}{\lambda^2}}\, dy,$$
by formula \eqref{completa}, we deduce that 

\begin{eqnarray}\label{rh}
{\epsilon}\lim_{\lambda\to 1}\int_{S_{\lambda,q,n}(r)}\frac{1}{\lambda^2}\frac{|\nabla u(y+\frac{1-\lambda}{\lambda}(y-z))-\nabla u(y)|^2}{\frac{(1-\lambda)^2}{\lambda^2}}\, dy
\end{eqnarray}
\begin{align}
&\le\lim_{\lambda\to 1}\int_0^{\pi}\!\! \int_{0}^{\rho(\theta)}\frac{G(\rho)}{(1-\lambda)}\frac{\left[\nabla \left(u \left(h(\theta)\left(\rho+\frac{1-\lambda}{\lambda}\rho\right)\right) - u(\rho h(\theta))\right)^-\right]}{(1-\lambda)}|J_h|\, d\rho\,\d\theta\\
&\le\lim_{\lambda\to 1}\int_0^{\pi}\!\!\int_{0}^{\rho(\theta)}\!\!\!\!\frac{G(\rho)}{(1-\lambda)}\frac{\nabla \left[\left(u \left(h(\theta)\left(\rho+\frac{1-\lambda}{\lambda}\rho\right)\right) - u(\rho h(\theta))\right)\chi_{S_{\lambda, q, n}}\right]}{(1-\lambda)}|J_h|\, d\rho \, d\theta 
\end{align}

In order to calculate the limits in the previous inequality, we first need to know the limits of the sets $S_{\lambda,q,n}(r)$ and $S_{\lambda,q,n}$. To this aim, observe that
$$u_{\lambda}(x)> 
 u(x)+\left(q-\frac{1}{n}\right)(1-\lambda)$$
 is equivalent to 
 $$\frac{\lambda u\left(\frac{x-z}{\lambda}+z\right)-u(x)}{1-\lambda}>q-\frac{1}{n}$$
 that is
 $$\frac{\lambda u\left(\frac{x-z}{\lambda}+z\right)-\lambda u(x)-(1-\lambda )u(x)}{1-\lambda}>q-\frac{1}{n}$$
 and also
  $$\frac{\lambda u\left(\frac{x-z}{\lambda}+z\right)-\lambda u\left(\frac{x+(1-\lambda)(z-x)-z}{\lambda}+z\right)-(1-\lambda )u(x)}{1-\lambda}>q-\frac{1}{n}$$
 i.e.
 \begin{eqnarray}\label{primafraz}\frac{u\left(\frac{x-z}{\lambda}+z\right)-u(x)}{\frac{1-\lambda}{\lambda}}-u(x)>q-\frac{1}{n}
 \end{eqnarray}
so that, if we pass to the limit as $\lambda\to 1$,
 we get 
\begin{eqnarray}
\langle \nabla u(x), x-z\rangle -u(x)\ge q-\frac{1}{n}
\end{eqnarray}
which yields, for $q>(r+1)\diam \Omega+\|u\|_{\infty}+1$,
 $$|\nabla u|> r+1$$
 It follows that
 \begin{eqnarray*}
 &&S_{q,n}^-:=\{\langle \nabla u(x), x-z\rangle -u(x)> q-\frac{1}{n}\}\subseteq\lim_{\lambda\to 1}S_{\lambda,q,n}(r)\cr\cr&&\subseteq\lim_{\lambda\to 1}S_{\lambda,q,n}=S\subseteq \{\langle \nabla u(x), x-z\rangle -u(x)\ge q-\frac{1}{n}\}=:S_{q,n}^+
 \end{eqnarray*}
 Now {let's switch on the right hand side of inequality \eqref{rh} observing} that 
\begin{eqnarray*}
&&\frac{G(\rho)}{(1-\lambda)}=\frac{H(\rho)\rho^{N-1}-(N-1)\int_0^\rho \rho^{N-2}H(s)\, ds}{(1-\lambda)}\cr\cr&&=
\frac{\rho^{N-1}\int_{\rho}^{\rho+\frac{1-\lambda}{\lambda}\rho}g(sh(\theta))\ ds-(N-1)\rho^{N-2}\int_0^\rho \int_{s}^{s+\frac{1-\lambda}{\lambda}s}g(th(\theta))\ dt}{(1-\lambda)}\cr\cr&&=
\frac{\rho^N}{\lambda}\left[\frac{\lambda}{(1-\lambda)\rho}\int_{\rho}^{\rho+\frac{1-\lambda}{\lambda}\rho}g(sh(\theta))\ ds\right]\cr\cr&&-(N-1)\rho^{N-2}\int_0^\rho \frac{s}{\lambda} \left(\frac{\lambda}{(1-\lambda)s}\int_{s}^{s+\frac{1-\lambda}{\lambda}s}g(th(\theta))\ dt\right)\, ds,
\end{eqnarray*}
and hence 
\begin{eqnarray*}
&&\lim_{\lambda\to 1}\int_0^{\pi}\d\theta \int_{0}^{\rho(\theta)}\frac{G(\rho)}{(1-\lambda)}\frac{\nabla \left[\left(u \left(h(\theta)\left(\rho+\frac{1-\lambda}{\lambda}\rho\right)\right) - u(\rho h(\theta))\right)\chi_{S_{\lambda, q, n}}\right]}{(1-\lambda)}|J_h|\, d\rho\cr\cr
&&=\int_0^{\pi}\!\!\d\theta \int_{0}^{\rho(\theta)}\!\!\left[\rho^N g(\rho h(\theta))-(N-1)\rho^{N-2}\int_0^\rho s g(sh(\theta))\, ds\right] \!\!\langle D^2u(\rho h(\theta))\chi_{S}, \rho h(\theta)\rangle\,|J_h|\, d\rho
\cr\cr
&&\le
\int_0^{\pi}\!\!\d\theta \int_{0}^{\rho(\theta)}\!\!\left|\rho^N g(\rho h(\theta))-(N-1)\rho^{N-2}\int_0^\rho s g(sh(\theta))\, ds\right| \cdot|\langle D^2u(\rho h(\theta))\chi_{S}, \rho h(\theta)\rangle|\,|J_h|\, d\rho\cr\cr&&\le
\int_0^{\pi}\!\!\d\theta \int_{0}^{\rho(\theta)}\!\!\left[\rho^N\|g\|_{\infty}+(N-1)\rho^{N-2}\int_{0}^{\rho}s\|g\|_{\infty}\, ds\right]\cdot|\langle D^2u(\rho h(\theta))\chi_{S}, \rho h(\theta)\rangle|\, |J_h|\,d\rho\cr\cr&&\le
\int_0^{\pi}\!\!\d\theta \int_{0}^{\rho(\theta)}\frac{N+1}{2}\rho\|g\|_{\infty}\cdot|\langle D^2u(\rho h(\theta))\chi_{S}, \rho h(\theta)\rangle||J_h|\, \rho^{N-1}d\rho
\cr\cr&& \le \frac{N+1}{2}\|g\|_{\infty} \int_{S_{ q, n}^+}|y-z|\cdot |\langle D^2u(y), (y-z)\rangle|\, dy\cr\cr&&\le
\frac{N+1}{2}\|g\|_{\infty} {\rm diam} \,\Omega\int_{S_{ q, n}^+} |\langle D^2u(y), (y-z)\rangle|\, dy.
\end{eqnarray*}
Then formula \eqref{rh} yields
 \begin{eqnarray*}
 &&{\epsilon}\int_{S_{q,n}^-}|\langle D^2u(y), (y-z)\rangle|^2\, dy\cr\cr&&\le \frac{N+1}{2}\|g\|_{\infty} {\rm diam} \,\Omega\int_{S_{q,n}^+} |\langle D^2u(y), (y-z)\rangle|\, dy
\end{eqnarray*}
and passing to the limit as $n\to \infty$, we get
 \begin{eqnarray}\label{final-lim}
 &&{\epsilon}\int_{S_{q}}|\langle D^2u(y), (y-z)\rangle|^2\, dy\cr\cr&&\le \frac{N+1}{2}\|g\|_{\infty} {\rm diam} \,\Omega\int_{S_{q}} |\langle D^2u(y), (y-z)\rangle|\, dy
\end{eqnarray}
where
$$S_{q}:=\{\langle \nabla u(x), x-z\rangle -u(x)-q\ge 0\}.$$
The use of  H\"older's inequality to the integral in the right hand side gives
$$\int_{S_{q}} |\langle D^2u(y), (y-z)\rangle|\, dy\le \left(\int_{S_{q}}|\langle D^2u(y), (y-z)\rangle|^2\, dy\right)^{\frac{1}{2}}\mu(S_q)^{\frac{1}{2}}$$
so that \eqref{final-lim} gives
\begin{eqnarray*}\label{nomen}
 {\epsilon}\left(\int_{S_{q}}|\langle D^2u(y), (y-z)\rangle|^2\, dy\right)^{\frac{1}{2}}\le \frac{N+1}{2}\|g\|_{\infty} {\rm diam}\Omega \cdot\mu(S_q)^{\frac{1}{2}}
\end{eqnarray*}
On the other hand, if  we first use  H\"older's inequality with exponents $\frac{N}{N-1}$ and $N$, with the aid of the Sobolev and H\"older inequalities  we get
\begin{eqnarray*}
&&\int_{S_{q}}|\langle \nabla u(x), x-z\rangle -u(x)-q|\, dx\cr\cr&&\le
\left(\int_{S_{q}}|\langle \nabla u(x), x-z\rangle -u(x)-q|^{\frac{N}{N-1}}\, dx\right)^{\frac{N-1}{N}}\mu(S_q)^{\frac{1}{N}}\cr\cr&&
\le c\left(\int_{S_{q}}|\nabla(\langle \nabla u(x), x-z\rangle -u(x)-q|\right)\mu(S_q)^{\frac{1}{N}}\cr\cr&&=
c\left(\int_{S_{q}}|\langle D^2u(x), (x-z)\rangle|\right)\mu(S_q)^{\frac{1}{N}}\cr\cr&&\le
c\left(\int_{S_{q}}|\langle D^2u(x), (x-z)\rangle|^2\right)^{\frac{1}{2}}\mu(S_q)^{\frac{1}{2}+\frac{1}{N}}
\end{eqnarray*}
and hence, combining the last two inequalities above, we have
\begin{eqnarray}
\int_{S_{q}}|\langle \nabla u(x), x-z\rangle -u(x)-q|\, dx\le \frac{c}{\epsilon}\|g\|_{\infty} \mu(S_q)^{1+\frac{1}{N}}
\end{eqnarray}
At this point, we use Cavalieri principle  obtaining  that
\begin{eqnarray}\label{cavalieri}
\int_{q}^{\infty}\mu\{x:\langle \nabla u(x), x-z\rangle -u(x)\ge t\}\, dt\le \frac{c}{\epsilon}\|g\|_{\infty} \mu(S_q)^{\frac{N+1}{N}}
\end{eqnarray}
that is
\begin{eqnarray}
&&\int_{q}^{\infty}\mu\{x:\langle \nabla u(x), x-z\rangle -u(x)\ge t\}\, dt\cr\cr&&\le \frac{c}{\epsilon}\|g\|_{\infty} \mu\{x:\langle \nabla u(x), x-z\rangle -u(x)\ge q\}^{\frac{N+1}{N}}
\end{eqnarray}
 Arguing now as in Proposition 4.5 in \cite{BB},
 we set 
 $$\Theta(q):=\int_{q}^{\infty}\mu\{x:\langle \nabla u(x), x-z\rangle -u(x)\ge t\}\, dt$$
having the differential inequality
\begin{eqnarray}
\label{disdif}\Theta(q)\le \frac{c}{\epsilon}\|g\|_{\infty} \left(-\Theta'(q)\right)^{\frac{N+1}{N}}
\end{eqnarray}
for every $q>(r+1)\diam \Omega+\|u\|_{\infty}+1=:r_0$.
 
\noindent Recalling a result due to Hartman and Stampacchia (see \cite{HS}), we get
$\Theta(q)=0$  $\forall q>r_0+(N+1)\left(\frac{c}{\epsilon}\|g\|_{L^{\infty}(\Omega)}\right)^{\frac{N}{N+1}}\left(\int_{r_0}^{+\infty}\mu\{x:\langle \nabla u(x), x-z\rangle -u(x)\ge t\}\, dt\right)^{\frac{1}{N+1}}$.
Since inequality \eqref{disdif} gives
\begin{eqnarray*}
&&q_0:=r_0+(N+1)\left(\frac{c}{\epsilon}\|g\|_{L^{\infty}(\Omega)}\right)^{\frac{N}{N+1}}\mu(\Omega)^{\frac{1}{N}}\cr\cr
&&>
r_0+(N+1)\left(\frac{c}{\epsilon}\|g\|_{L^{\infty}(\Omega)}\right)^{\frac{N}{N+1}}\left(\int_{r_0}^{+\infty}\mu\{x:\langle \nabla u(x), x-z\rangle -u(x)\ge t\}\, dt\right)^{\frac{1}{N+1}},
\end{eqnarray*}
 we deduce that $\Theta(q)=0$, that is $|S_q|=0$, $\forall q>q_0$. Then for $z\in \partial \Omega$ fixed and  for a.e $x\in\Omega$, we have
$$\langle \nabla u(x), x-z\rangle -u(x)\le q.$$
Now consider a sequence $\{z_n\}_n\subset \partial \Omega$ such that $\overline{\{z_n\}}_n= \partial \Omega$ and observe that for a.e $x\in\Omega$, we have
\begin{eqnarray}
   \langle \nabla u(x), x-z_n\rangle -u(x)\le q 
\end{eqnarray}
 so that
\begin{eqnarray}\label{stiman}
\langle \nabla u(x), \frac{x-z_n}{|x-z_n|}\rangle \le \frac{\|u\|_{\infty}+q }{|x-z_n|}\le \frac{\|u\|_{\infty}+q }{\dist(x, \partial \Omega)}
\end{eqnarray}
If we choose $\{z_n\}_n$ such that
$$\frac{x-z_n}{|x-z_n|}\to \frac{\nabla u(x)}{|\nabla u(x)|},$$
passing to the limit for $n\to\infty$ in \eqref{stiman}, we obtain
\begin{eqnarray*}
|\nabla u(x)|\le \frac{Q}{\dist(x, \partial \Omega)}\quad \mathrm{for\, a.e.} \quad x\in \Omega
\end{eqnarray*}
 with $Q= \|u\|_{\infty}+q_0$ depending only on  the data of the problem but independent on $\bar \epsilon$.
\end{proof}

\section{Proof of Theorem \ref{mainth}}\label{sec4}

 The aim of this  section is the proof of  Theorem \ref{mainth} splitted in several steps for the convenience of the reader. In the first one 
we shall consider approximating functionals ${\mathcal I_k}(u)$ which are uniformly convex and we shall obtain an estimate for the gradient of their minimizers. The crucial features of the estimate are  its  dependence only on the data of the problem and its uniformity with respect to $k$. 
\begin{proof}
{\it Step 1} 
Arguing similarly as in Theorem \ref{approximation}, we get the existence of  a nondecreasing sequence  $\{f_k\}_k$ of convex, smooth  functions which converges uniformly to $f$ and such that $\forall k\in \N$, it holds
\begin{eqnarray}\label{unif}
f_k(\theta\xi+(1-\theta)\zeta)\le\theta f_k(\xi)+(1-\theta)f_k(\zeta)-{\frac{\epsilon}{4}} \theta(1-\theta)|\xi-\zeta|^2
\end{eqnarray}
for every $\xi, \zeta\in \R^N$ such that $|\zeta|>{r+2}$ and for every $\theta\in [0,1]$. Moreover let us   assume $g_k\in C^{\infty}(\Omega)$ converging  to $g$ in $L^{\infty}$. We will use later that $ k\in \N$ sufficiently large, it holds
\begin{equation}\label{stimag}
\|g_k\|_{L^{\infty}(\R^N)}\le \|g\|_{L^{\infty}(\Omega)}+1   
\end{equation}
Now consider the functional
\begin{eqnarray}
& \displaystyle {\mathcal I_k
(u)}:=\int_{\Om} \Big{[}f_k(\nabla
u(x))+\frac{1}{k}|\nabla u(x)|^2+g_k(x)u(x)\Big{]}\,\,d{x}.
\end{eqnarray}
and denote with $u_k$ the unique minimizer in the class $\phi+W^{1,1}_0(\Omega)$ (see Theorem 8.1 \cite{St} and Theorem 3.1 in \cite{BC} ) and observe that, by Theorem 3.1 in \cite{BC}, it is actually locally Lipschitz.

\noindent Therefore estimate \eqref{estimateapp}  yields for every minimizer $u_k$, that is 
\begin{eqnarray*}
 |\nabla u_k(x)|\le \frac{Q_k}{\dist(x,\partial \Omega)}\quad \mathrm{for\, a.e} \quad x\in  \Omega,  
\end{eqnarray*}
where  
$Q_k=Q_k(\|u_k\|_\infty, R, {\rm diam}\,\Omega, \epsilon, N, L, \|\phi\|_{1,\infty})$ and it is independent on $\frac{1}{k}$.
Next aim is to show that  we can estimate uniformly the constants $Q_k$. It will be enough to  prove that we can estimate uniformly $\| u_k\|_{\infty}$. 

\noindent Define 
$$h_k(\xi)=f_k(\xi)+\frac{1}{k}|\xi|^2$$
and observe that
\[
h_k(\xi)\le f(\xi)+|\xi|^2=h(\xi) \quad \forall \xi\in\R^N, 
\]
since $f_k(\xi)\le f(\xi)$
 and that $$f(\xi)-1\le h_k(\xi)\quad \forall \xi\in\R^N$$
Applying Theorem \ref{thm:comparison1}, we  obtain a constant $U_0$ such that
\begin{eqnarray}\label{star}\|u_k\|_{\infty}\le U_0 \quad \forall k\in \N.\end{eqnarray}
Then, Theorem \ref{aprioriestimate} gives
\begin{eqnarray}\label{costante}
|\nabla u_k(x)|\le \frac{Q}{\dist(x, \partial \Omega)}\quad \mathrm{for\, a.e.} \quad x\in \Omega
\end{eqnarray}
with $Q$ independent on $k$.

\medskip

{\it Step 2} 
Let us prove that  $\{u_k\}_k$ is a minimizing sequence for the functional $\mathcal{I}$ and that it converges weakly in $W^{1,1}(\Omega)$ to a minimum which is locally Lipschitz. 

\noindent Let   $u \in \phi+W^{1,1}_0$ be a minimizer of  $\mathcal{I}$ and observe that, by the minimality of $u_k$ for $\mathcal{I}_k$,  we have that
\begin{eqnarray*}
\int_{\Omega}f_k(\nabla u_k)+\frac{1}{k}|\nabla u_k|^2+g_k(x)u_k(x)\, dx\le \int_{\Omega}f_k(\nabla u)+\frac{1}{k}|\nabla u|^2+g_k(x)u(x)\, dx.
\end{eqnarray*}
Since the dominated convergence Theorem yields
\begin{eqnarray*} \lim_{k}\int_{\Omega}f_k(\nabla u)+\frac{1}{k}|\nabla u|^2+g_k(x)u(x)\, dx=\int_{\Omega}f(\nabla u)+g(x)u(x)\, dx,
\end{eqnarray*}
 for every $\sigma>0$ and $k\in \N$ sufficiently large, we get
\begin{eqnarray}\label{disconeps}
\int_{\Omega}\!\!f_k(\nabla u)+\frac{1}{k}|\nabla u|^2+g_k(x)u(x)\, dx\le \!\!\int_{\Omega}\!\!f(\nabla u)+g(x)u(x)\, dx+\sigma.
\end{eqnarray}
On the other hand, Theorem \ref{approximation} ensures that for $k\in \N$ sufficiently large, we have
\begin{eqnarray*}
&&\int_{\Omega}f_k(\nabla u_k)+\frac{1}{k}|\nabla u_k|^2+g_k(x)u_k(x)\, dx\cr\cr&&
\ge \int_{\Omega}f(\nabla u_k)-\sigma+ (g_k(x)-g(x))u_k(x)+g(x)u_k(x)\, dx.
\end{eqnarray*}
Since $g_k$ converges uniformly to $g$ and  \eqref{star} holds, for a sufficiently large  $k\in \N$, we have also that
\begin{eqnarray}\label{sotto}
&&\int_{\Omega}f_k(\nabla u_k)+\frac{1}{k}|\nabla u_k|^2+g_k(x)u_k(x)\, dx\cr\cr&&
\ge \int_{\Omega}f(\nabla u_k)+g(x)u_k(x)\, dx- c(\mu(\Omega), U_0)\, \sigma.
\end{eqnarray}
Combining  \eqref{sotto} with\eqref{disconeps}, we obtain
\begin{eqnarray}
&&\int_{\Omega}f(\nabla u_k)+g(x)u_k(x)\, dx\cr\cr&&\le \int_{\Omega}f(\nabla u)+g(x)u(x)\, dx+\sigma(1+c(\mu(\Omega)), U_0)). 
\end{eqnarray}
 Since $u_k$ is minimizing, we deduce the existence of a subsequence $u_{k_j}$ weakly convergent in $W^{1,1}(\Omega)$ to a function $\bar u$ that is a minimizer of $\mathcal{I}$ in $\phi+W_0^{1,1}(\Omega)$. Let us define 
$$\Omega_n=\Big\{x\in \Omega:\dist(x, \partial \Omega)>\frac{1}{n}\Big\} $$ and consider $u_{k_j}$ restricted to $\Omega_n$. By \eqref{costante}, we get
a uniform estimate of $u_{k_j}$ in $W^{1,\infty}(\Omega_n)$ and hence,  up to a subsequence,  $u_{k_j}$ weakly-$*$ converges to $\tilde u$ in  $W^{1,\infty}(\Omega_n)$ and hence weakly in $W^{1,1}(\Omega_n)$. It follows that $\tilde u=\bar u$ in $\Omega_n$ and, since $\tilde u$ satisfies the same estimate of $u_{k_j}$, we get that $\bar u$ is locally Lipschitz.

\medskip

{\it Step 3} It remains to prove that any minimizer $u\in \phi+W^{1,1}_0(\Omega)$ of the functional $\mathcal{I}$ is locally Lipschitz too. 

\noindent Let us denote
$$A=\{x\in \Omega: \nabla u(x), \nabla \bar u(x) \in B_{r+1}\}$$and
$$B=\{x\in A^c: \nabla u(x)\ne \nabla \bar u(x)\}.$$
We  prove that $\mu(B)=0$. Assume that $\mu(B)>0$, it follows that we can write
\begin{eqnarray}
&&\mathcal{I}\left(\frac{1}{2} u(x)+\frac{1}{2} \bar u(x)\right)\cr\cr &&=\int_B f\left(\frac{1}{2}\nabla u(x)+\frac{1}{2}\nabla \bar u(x)\right)+\frac{1}{2}g(x)(u(x)+\bar u(x))\, dx\cr\cr&&
+\int_{B^c} f\left(\frac{1}{2}\nabla u(x)+\frac{1}{2}\nabla \bar u(x)\right)+\frac{1}{2}g(x)(u(x)+\bar u(x))\, dx\cr\cr&&
<\frac{1}{2}\mathcal{I}(u)+\frac{1}{2}\mathcal{I}(\bar u)\le \frac{1}{2}\mathcal{I}(u)+\frac{1}{2}\mathcal{I}(u)=\mathcal{I}(u)
\end{eqnarray}
where, in the second last line, we used the strict convexity of the function $f$. Indeed, for every  $x\in{B}$, at least one  between $\nabla u(x)$ and $\nabla \bar u(x)$ belongs to the set in which $f$ is strictly convex. The minimality of $u$ gives a contradiction.
\end{proof}
 With the next result  we observe the validity of H\"older continuity property up to the boundary of the minimizers of $\mathcal{I}$ provided we add some assumptions.  

  \begin{corollary}\label{Clarke}
 Let $\Omega$, $f$ and $g$ be such that  the hypotheses of Theorem \ref{mainth} hold. In addition, assume that $\Omega$ has the boundary of class $ C^{1,1}$ and that $f$ satisfies the following coercivity condition 
 $$f(\xi)\ge c |\xi|^p$$
 for some constants $c>0$ and $p>\frac{n+1}{2}$.
 For a fixed function $\phi$ satisfying the LBSC, any minimizer $u$ of $\mathcal{I}$ in \eqref{funzionale} is  H\"older continuous on $\bar \Omega$ of order
 $$\alpha:=\frac{2p-n-1}{4p+n-3}.$$
 \end{corollary}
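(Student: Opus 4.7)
The strategy is to combine the interior Lipschitz estimate from Theorem \ref{mainth} with a boundary H\"older decay for $u$, which is the new ingredient stemming from the coercivity of $f$ and the $C^{1,1}$ regularity of $\partial\Omega$. The exponent $\alpha$ will emerge from optimally balancing these two estimates.

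\emph{A priori $W^{1,p}$ bound.} Let $\ell$ be the Lipschitz lower barrier of Theorem \ref{thm:barriere}. Since $\ell\in\phi+W^{1,1}_0(\Omega)$ and $\ell$, $u$ are bounded (Theorem \ref{thm:comparison1}), minimality of $u$ gives
\[c\int_\Omega |\nabla u|^p\,dx\le \int_\Omega f(\nabla u)\,dx\le \int_\Omega f(\nabla \ell)\,dx+\|g\|_\infty\int_\Omega |\ell-u|\,dx\le C,\]
so that $u\in W^{1,p}(\Omega)$ with controlled norm. Simultaneously, Theorem \ref{mainth} yields $|\nabla u(x)|\le Q/\dist(x,\partial\Omega)$ a.e., and Theorem \ref{thm:barriere} gives the one-sided boundary inequality $u(x)-\phi(x_0)\ge -L|x-x_0|$ for every $x_0\in\partial\Omega$.

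\emph{Boundary decay.} The key technical step is to establish the matching one-sided upper estimate
\[u(x)-\phi(x_0)\le C\,\dist(x,\partial\Omega)^\gamma,\qquad \gamma:=\frac{2p-n-1}{2(p+n-1)},\]
for every $x\in\Omega$ close enough to $\partial\Omega$ and $x_0$ its projection onto $\partial\Omega$. I would flatten $\partial\Omega$ near $x_0$ using its $C^{1,1}$ regularity and work on a local half-ball $B_\rho(x_0)\cap\Omega$, choosing $\rho$ as an appropriate power of $\dist(x,\partial\Omega)$. On such regions, a Morrey/Campanato-type estimate for $u-\tilde\phi$ (with $\tilde\phi$ a Lipschitz extension of $\phi|_{\partial\Omega}$, so that $u-\tilde\phi$ has vanishing trace) would combine the global $L^p$ energy bound (useful close to $\partial\Omega$) with the pointwise gradient estimate (useful where $\dist(\cdot,\partial\Omega)$ is not too small). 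The threshold $p>(n+1)/2$ is exactly what guarantees $\gamma>0$.

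\emph{Global H\"older continuity.} For $x,y\in\bar\Omega$ with $\rho=|x-y|$ small, set $\beta=1/(1+\gamma)$ and distinguish two cases. If $\min\{\dist(x,\partial\Omega),\dist(y,\partial\Omega)\}\ge\rho^\beta$, the segment $[x,y]$ lies in $\Omega_{\rho^\beta/2}$ and the interior Lipschitz estimate gives $|u(x)-u(y)|\le Q\rho/\rho^\beta=Q\rho^{1-\beta}$. Otherwise, without loss of generality $\dist(x,\partial\Omega)<\rho^\beta$; combining the boundary decay applied to $x$ (and, when $\dist(y,\partial\Omega)\ge\rho^\beta$, reaching an interpolating point at distance $\rho^\beta$ from $\partial\Omega$ from $y$ via the interior estimate) together with the Lipschitz behaviour of $\phi$ along $\partial\Omega$ (from the LBSC) yields $|u(x)-u(y)|\le C\rho^{\beta\gamma}$. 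The choice $\beta=1/(1+\gamma)$ equalises $1-\beta$ and $\beta\gamma$, producing
\[\alpha=\frac{\gamma}{1+\gamma}=\frac{2p-n-1}{4p+n-3}.\]

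\emph{Main obstacle.} The boundary decay estimate is the technical heart of the proof: extracting the exponent $\gamma$ from the competition between a global $L^p$ energy bound and a pointwise gradient bound blowing up like $1/\dist(\cdot,\partial\Omega)$ requires careful local scaling on half-balls whose radius is a suitable power of $\dist(x,\partial\Omega)$, and crucially exploits the $C^{1,1}$ regularity of $\partial\Omega$. It is precisely here that the restriction $p>(n+1)/2$ is used; all remaining steps are then a bookkeeping of the interplay between interior blow-up and boundary decay.
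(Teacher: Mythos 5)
You have correctly identified the architecture of the proof, which is the one of Clarke and of Theorem 4.2 in Bousquet--Clarke, to which the paper explicitly defers: the $C^{1,1}$ regularity of $\partial\Omega$ yields the interior sphere condition and unique projections near the boundary; one proves a decay estimate for $u(x)-\phi(\gamma)$ with $\gamma$ the projection of $x$; and one globalizes by interpolating with the interior bound $|\nabla u|\le Q/\dist(x,\partial\Omega)$. Your energy bound $u\in W^{1,p}$ via comparison with the barrier, the observation that the barrier already gives the (stronger, Lipschitz) lower bound $u(x)\ge \phi(\gamma)-L|x-\gamma|$, the fact that the LBSC applied at all pairs of boundary points forces $\phi$ to be Lipschitz on $\partial\Omega$, and the final two-case interpolation with $\beta=1/(1+\gamma)$ producing $\alpha=\gamma/(1+\gamma)=\frac{2p-n-1}{4p+n-3}$ are all correct.

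The gap is the upper boundary decay $u(x)-\phi(\gamma)\le C\,\dist(x,\partial\Omega)^{\gamma}$, which is the entire content of the corollary beyond Theorem \ref{mainth} and which you assert rather than prove: the exponent $\gamma=\frac{2p-n-1}{2(p+n-1)}$ is reverse-engineered from the desired conclusion, and the mechanism you propose (a Morrey/Campanato-type estimate on half-balls combining the global $L^p$ energy with the pointwise gradient bound) does not close as described. A Campanato iteration for $u-\tilde\phi$ requires $p>n$, since the dyadic increments scale like $h^{1-n/p}$ and are not summable otherwise; and the natural combination of your two ingredients --- a boundary Poincar\'e inequality for $v=u-\ell\ge 0$ giving $\int_{B(\gamma,2t)\cap\Omega}v\le Ct^{1+n-n/p}$, hence $\min_{B(x_t,r)}v\le Ct^{1+n-n/p}r^{-n}$, together with the oscillation bound $Cr/t$ on a ball $B(x_t,r)$ with $r\le t/2$ --- optimizes to a positive power of $t$ only when $p\ge n$. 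Since for $n\ge 2$ the interesting range is precisely $\frac{n+1}{2}<p<n$, the route you sketch fails exactly where the hypothesis $p>\frac{n+1}{2}$ is supposed to do its work. To complete the argument you must either reproduce the finer boundary estimate of Clarke and Bousquet--Clarke, which exploits the one-sided inequality $u\ge\ell$ and the minimization structure in an essential (not merely interpolatory) way, or cite it, as the paper does.
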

 The proof follows exactly the one given in \cite{Cl}, see also Theorem 4.2 in \cite{BC}. Here we give only some details. First of all, we observe that the assumption on the boundary of $\Omega$ implies that for every point $\gamma$ of $\partial \Omega$ there exists a closed ball of  radius $R>0$ contained in $\bar \Omega$  which contacts $\partial \Omega$ at $\gamma$. Hence every point in {$\Omega$} of distance less than $R$ from {$\partial\Omega$} admits a unique projection on  $\partial \Omega$.  Moreover, for every $\gamma\in \partial \Omega$, there is a unique exterior   unit normal $\nu(\gamma)$ to 
$\partial \Omega$ and the function $\nu(\gamma)$ is Lipschitz continuous.
The H\"older continuity estimate is proven first for  $x\in \Omega$ sufficiently close to the boundary and $\gamma\in \partial \Omega$ projection of $x$ onto $\partial \Omega$ and {then} globally for every $x$ and $y$ in $\bar \Omega$ .

 \section{The nonconvex case}\label{nccase}
 In this Section we prove, as an application of Theorem \ref{mainth}, an existence result of a locally Lipschitz minimizer for the functional $\mathcal{I}$ allowing $f$ to be nonconvex. 

\begin{theorem}\label{nonconvex}
 Let $\Omega$  and $g$ be such that  the hypotheses of Theorem \ref{mainth} hold. Moreover assume that $|\{x\in \Omega: g(x)=0\}|=0$ and that $f^{**}$ satisfies  the
following hypotheses
\begin{itemize}
\item[($F^*$1)]  $f^{**}(\xi)\ge 0$ and    $f^{**}(0)=0$;
\item[($F^*$2)]  
${\rm dom}f^{**}:=\{x\in \R^n: f(x)\in \R\}
=\R^n$;
\item[($F^*$3)] there exists a radius $r>0$  such that
$$f^{**}(\theta\xi+(1-\theta)\zeta)\le\theta f^{**}(\xi)+(1-\theta)f^{**}(\zeta)-\frac{\epsilon}{2} \theta(1-\theta)|\xi-\zeta|^2$$
 for every $\xi, \zeta\in \R^N$ such that $|\zeta|>r+1$ and for every $\theta\in [0,1]$.
\item[($F^*$4)]   
 $\{\xi \in \R^N:f(\xi)\neq f^{**}(\xi)\}=\cup_{i=1}^\infty {\rm int }\, S_i$, where the sets $S_i$ are convex and such that ${\rm int}\,S_i\neq \emptyset$, $\bar S_i\cap \bar S_j=\emptyset$.
 \end{itemize}
 Then there exists a minimizer of $\mathcal{I}$ that is locally Lipschitz continuous in $\Omega$.
\end{theorem}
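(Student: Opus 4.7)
The plan is to obtain a locally Lipschitz minimizer of $\mathcal{I}$ by solving the convexified problem first, then to show that its minimizer already avoids the region where $f\neq f^{**}$. I would set
$\mathcal{I}^{**}(v) := \int_\Omega [f^{**}(\nabla v) + g(x)\, v]\, dx$
and observe that hypotheses $(F^*1)$--$(F^*3)$ are exactly the assumptions $(F1)$--$(F3)$ applied to the convex function $f^{**}$. Invoking Theorem~\ref{mainth} for $\mathcal{I}^{**}$ then yields a minimizer $u \in \phi + W^{1,1}_0(\Omega)$ which is locally Lipschitz in $\Omega$, and hence differentiable almost everywhere by Rademacher's theorem.

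The heart of the argument will be the claim that $\nabla u(x) \notin \bigcup_i \operatorname{int} S_i$ for almost every $x \in \Omega$. Once this is in hand, $f(\nabla u) = f^{**}(\nabla u)$ a.e., so $\mathcal{I}(u) = \mathcal{I}^{**}(u)$; combined with the pointwise bound $f^{**} \le f$ one obtains $\mathcal{I}(v) \ge \mathcal{I}^{**}(v) \ge \mathcal{I}^{**}(u) = \mathcal{I}(u)$ for every admissible $v$, and the local Lipschitz regularity of $u$ transfers automatically.

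To prove the claim I would argue by contradiction: suppose $E := \{x \in \Omega : \nabla u(x) \in \operatorname{int} S_i\}$ has positive measure for some $i$. By $(F^*4)$ the closures $\bar S_i$ are pairwise disjoint, so different indices are handled independently. Since $|\{g=0\}|=0$, one can split $E = E^+ \cup E^-$ modulo a null set according to the sign of $g$, and at least one piece, say $E^+$, has positive measure. At a Lebesgue point $x_0 \in E^+$ at which $u$ is differentiable, I would fix a small ball $B = B_\rho(x_0) \subset \Omega$ on which the density of $E^+$ is close to $1$, the function $g$ is bounded away from $0$, and $\nabla u$ is close to some $\xi_0 \in \operatorname{int} S_i$. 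Next I would implement a Cellina-type pyramidal construction in the spirit of \cite{Cel1,STC,DM}: starting from the affine tangent $\ell(x) = u(x_0) + \xi_0\cdot(x-x_0)$ and a representation $\xi_0 = \sum_k \lambda_k \eta_k$ with $\eta_k \in \partial S_i$ and $f^{**}(\xi_0)=\sum_k \lambda_k f(\eta_k)$, one builds a Lipschitz perturbation $w \in W^{1,\infty}_0(B)$ with $w \le 0$ and $\nabla(\ell+w) \in \{\eta_k\}$ almost everywhere. Gluing $\tilde u := \ell+w$ inside $B$ to $u$ outside (after a thin boundary-layer correction so that $\tilde u = u$ on $\partial B$) produces an admissible competitor with $\tilde u \le u$ in $B$, strict inequality on a set of positive measure, and $\nabla \tilde u$ taking values a.e.\ where $f=f^{**}$. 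Since $f^{**}$ is affine on $\operatorname{conv}\{\eta_k\}$, a divergence-theorem computation gives $\int_B f^{**}(\nabla \tilde u)\, dx \le \int_B f^{**}(\nabla u)\, dx$, while $\int_B g(\tilde u - u)\, dx < 0$ because $g$ is uniformly positive on most of $B$. Altogether $\mathcal{I}^{**}(\tilde u) < \mathcal{I}^{**}(u)$, contradicting minimality; the symmetric case $|E^-|>0$ uses an upward pyramid.

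The main obstacle is the pyramidal construction coupled with the bookkeeping of the lower-order term. In the pure gradient-functional situation of \cite{FF} the term $\int g u$ is absent, so one only needs to chase affineness of $f^{**}$ on the relevant faces of $\bar S_i$; here, by contrast, the sign of $\tilde u - u$ must be controlled so that the $\int g u$-contribution decreases, which is precisely why the hypothesis $|\{g=0\}|=0$ is needed (to partition $E$ into positive-measure pieces on which $g$ has a definite sign) and why the Cellina construction must be applied one-sidedly on each piece. The a.e.\ differentiability of $u$, delivered by Theorem~\ref{mainth}, is indispensable for the linearization step on the small ball $B$; without the Lipschitz regularity of the convex problem no such local affine reference would be available, and the pyramidal surgery could not be carried out.
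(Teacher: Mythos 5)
Your plan matches the paper's proof in all essential respects: solve the convexified problem via Theorem~\ref{mainth}, use a.e.\ differentiability of the locally Lipschitz minimizer $u$ to linearize at a well-chosen point, run a Cellina/Sychev-style pyramidal surgery with vertices $\eta_k\in\partial S_i$ and a one-sided perturbation whose sign is dictated by the sign of $g$ (where $|\{g=0\}|=0$ enters), exploit the affineness of $f^{**}$ on $\bar S_i$ from $(F^*4)$ together with a divergence-theorem cancellation to control the $f^{**}$-part, and a Lebesgue-density estimate to make the $\int g(\tilde u-u)$-part strictly negative, contradicting minimality. The only cosmetic difference is that the paper avoids a separate boundary-layer correction by shifting $w_\rho$ by $-\delta\rho/3$ so that $w_\rho\ge u$ on $\partial B_\rho(\bar x)$ and $E_\rho\Subset B_\rho(\bar x)$ automatically; otherwise your sketch is the paper's argument.
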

\begin{proof} Let us consider the functional
$$\mathcal{I^{**}}( u)=\int_{\Omega} f^{**}(\nabla  u)+g(x) u\, dx$$
It follows by Theorem \ref{mainth} that every $ u$  minimizer of $\mathcal{I^{**}}$ is locally Lipschitz continuous in $\Omega$ and hence a.e. differentiable. 

\noindent Let us show that the set $\tilde \Omega~:=~\{x\in \Omega: \nabla  u(x)\in \cup_{i=1}^{\infty}{\rm int}\,S_i \text{ and } g(x)\neq 0
\}$ is negligible. In contradiction, we assume that $\tilde \Omega$ has a positive measure. It follows from the existence of $x\in \Omega$ of differentiability for $ u$ such that $\nabla  u(x)\in \cup_{i=1}^{\infty}{\rm int}\,S_i$ and, without loss of generality, we can assume that $x$ is a density point $1$ for the set $\{x\in \Omega:g(x)>0\}$.

\noindent By a pyramidal construction of the type in \cite{STC}, we will show that for every $\bar x\in\tilde\Omega$, there exists a ball centered in $\bar x$ where the function $u$ can be modified to obtain a function $\tilde u_{\rho}$ that is in $\phi+W^{1,1}_0(\Omega)$ and is still locally Lipschitz continuous and such that
$\mathcal{I^{**}}( \tilde u_\rho)< \mathcal{I^{**}}( u)$ gets the incongruity.

\noindent \textit{ Step 1} \quad Fix $\bar x\in \tilde\Omega$ and assume that $\nabla  u(\bar x)\in  \text{int}\,S_i$, for some $i\in \N$. Now define $\delta:=\text {dist}(\nabla u(\bar x),\partial S_i )$ and $\delta':=\sup\{|\nabla u(\bar x)-\xi|,\, \xi\in\partial S_i\}$ and

\noindent observe that the differentiability of $u$ at $\bar x$ implies that there exists $\rho>0$ such that $B_{\rho}(\bar x)\subset \Omega$ and

\begin{eqnarray}\label{prima11}
u(x)\le u(\bar x)+\nabla u(\bar x)\cdot(x-\bar x)+\frac{\delta}{2}|x-\bar x|\quad \forall x\in B_{\rho}(\bar x)
\end{eqnarray}
and
\begin{eqnarray}\label{prima112}
u(x)\ge u(\bar x)+\nabla u(\bar x)\cdot(x-\bar x)-\frac{\delta'}{2}|x-\bar x|\quad \forall x\in B_{\rho}(\bar x).
\end{eqnarray}
Set $\mathcal{S}_i:=\!S_i-\nabla u(\bar x)$ so that $0\!\in \!\text{int}\,\mathcal{S}_i$, $\delta\!=\!\text {dist}(0,\partial \mathcal{S}_i)$ and $\delta'\!= \!\sup\{ |\xi|:\xi \in \partial \mathcal{S}_i\}$. The convexity of $\mathcal{S}_i$ allows to choose  $\xi_1, \ldots, \xi_{n+1}\in \partial\mathcal{S}_i=\partial S_i-\nabla u(\bar x)$ such that
$$0=\sum_{j=1}^{n+1}\lambda_j\xi_j \quad \text{with}\,\, \sum_{j=1}^{n+1}\lambda_j=1, \lambda_j>0.$$ 

\noindent At this point, assume  $g(\bar x)>0$  and define the functions
\begin{eqnarray}\label{seconda1}
v(x):=\sup_j \xi_j\cdot x \quad \forall x\in \R^n
\end{eqnarray}
and
\begin{eqnarray}\label{terza1}
w_\rho(x):=v(x-\bar x)+u(\bar x)+\nabla u(\bar x)(x-\bar x) -\frac{\delta}{3}\rho  \quad \forall x\in \Omega.
\end{eqnarray}
Since
$\delta |x|\le v(x)$, 
it follows that, for every $ x\in\partial B_{\rho}(\bar x)$, we have
\begin{eqnarray}\label{seconda11}
w_{\rho}(x)&\ge& u(\bar x)+\nabla u(\bar x)(x-\bar x)+\delta \rho -\frac{\delta}{3}\rho\cr\cr
&\ge& u(\bar x)+\nabla u(\bar x)(x-\bar x)+\frac{\delta}{2} \rho\ge u(x).
\end{eqnarray}
On the other hand, taking into account that $v(x)\le \delta' |x|$ , we get
\begin{eqnarray}\label{seconda112}
w_{\rho}(x)&\le& u(\bar x)+\nabla u(\bar x)(x-\bar x)+\delta'|x-\bar x|-\frac{\delta}{3}\rho \cr\cr &\le& u(\bar x)+\nabla u(\bar x)(x-\bar x)-\frac{\delta'}{2}|x-\bar x|\le u(x)
\end{eqnarray}
for every $ x\in B_{\frac{\delta}{\delta'}\frac{2\rho}{9}}(\bar x)$.
 Moreover,  if we set $$ E_\rho:=\!\{x\in\! \Omega\!: \!w_{\rho}(x)\!\le \!u(x)\}\cap B_{\rho}(\bar x)$$ and define
\begin{eqnarray*}
\tilde u_{\rho}(x):=\begin{cases} w_{\rho}(x)& \text{ if } x\in E_{\rho}\\
u(x)& \text{ if } x\in \Omega\setminus E_{\rho},
\end{cases}
\end{eqnarray*}
we have $\tilde u_{\rho}(x)\le u(x)$ in $\Omega$ and $\tilde u_{\rho}=\phi$ on $\partial \Omega$.

\noindent In the case $g(\bar x)<0$, we define the functions
\begin{eqnarray*}
v(x):=\inf_j \xi_j\cdot x \quad \forall x\in \R^n
\end{eqnarray*}
and
\begin{eqnarray}\label{terza11}
w_{\rho}(x):=v(x-\bar x)+u(\bar x)+\nabla u(\bar x)(x-\bar x) +\frac{\delta}{3}\rho  \quad \forall x\in \Omega 
\end{eqnarray} and, arguing similarly we have done above,  we obtain a function $\tilde u_\rho(x)\ge u(x)$.

\medskip

\noindent {\it Step 2} \quad Now we show that 
\begin{eqnarray}\label{disminimo}
\mathcal I^{**}(\tilde u_\rho)< \mathcal I^{**}( u)
\end{eqnarray}
{that will be true as soon as }
\begin{eqnarray}\label{claim}
\int_{E_\rho} f^{**}(\nabla  \tilde u_\rho)+g(x) \tilde u_\rho\, dx< \int_{E_\rho} f^{**}(\nabla   u)+g(x)  u\, dx.
\end{eqnarray}
We will show that
\begin{eqnarray}\label{claim1}
\int_{E_\rho} f^{**}(\nabla  \tilde u_\rho)\, dx\le \int_{E_\rho} f^{**}(\nabla   u)\, dx.
\end{eqnarray}
and
\begin{eqnarray}\label{claim2}
\int_{E_\rho} g(x) \tilde u_\rho\, dx< \int_{E_\rho} g(x)  u\, dx.
\end{eqnarray}
To this aim  we observe that ($F^*4$) implies that
$f^{**}(\xi)=a_i\cdot \xi+b_i$ for every $\xi\in S_i$ and denote by $\nabla f^{**}$ a selection of the subgradient of $f^{**}$ such that $\nabla f^{**}(\xi)=a_i$  for every $\xi\in\partial S_i$.

\noindent Consider again the ball $B_\rho(\bar x)$ fixed above.
  The convexity implies that
\begin{eqnarray}\label{dallaconv}
&&\int_{B_\rho(\bar x)} f^{**}(\nabla  \tilde  u_\rho)\, dx=\int_{B_\rho(\bar x)\setminus E_\rho} f^{**}(\nabla    u)\, dx+\int_{ E_\rho} f^{**}(\nabla    w_\rho)\, dx\cr\cr&&\le\int_{B_\rho(\bar x)\setminus E_\rho} f^{**}(\nabla    u)\, dx+
\int_{E_\rho} f^{**}(\nabla    u)+\nabla f^{**}(\nabla w_\rho)( \nabla    w_\rho-\nabla u) \, dx.
\end{eqnarray}
If we set
$$\tilde v_\rho(x):=\min\{w_\rho(x)-u(x), 0\}\quad \text{in}\, B_\rho(\bar x),$$
it follows that $\tilde v_\rho\in W^{1,1}_0(B_\rho(\bar x))$,
\begin{eqnarray*}
\tilde v_\rho=\begin{cases}
w_\rho-u \quad \text{in}\,\, E_\rho\\
0\quad \quad \quad\text{in} \,\, B_\rho(\bar x)\setminus E_\rho,
\end{cases}
\end{eqnarray*}  and
\begin{eqnarray*}
\nabla \tilde v_\rho=\begin{cases}
\nabla w_\rho-\nabla u \quad \text{in}\,\, E_\rho\\
0\quad \quad \quad \quad \quad\text{in} \,\, B_\rho(\bar x)\setminus E_\rho.
\end{cases}
\end{eqnarray*}
Therefore, since
$f^{**}(\xi)=a_i\cdot \xi+b_i \quad \forall\xi\in \bar S_i$ and $\nabla w_\rho\in \partial S_i$ a.e. $x\in B_\rho(\bar x)$,  we get
\begin{eqnarray*}
\int_{ E_\rho} \nabla f^{**}(\nabla w_\rho) (\nabla    w_\rho-\nabla u)\, dx=\int_{ B_\rho(\bar x)} \nabla f^{**}(\nabla w_\rho) \nabla \tilde v_\rho\, dx=\int_{ B_\rho(\bar x)} a_i\nabla \tilde v_\rho\, dx=0.
\end{eqnarray*}
and inequality \eqref{dallaconv} reduces to
\begin{eqnarray*}
\int_{B_\rho(\bar x)}\!\!\!\! f^{**}(\nabla  \tilde  u_\rho)\, dx
\le\int_{B_\rho(\bar x)\setminus E_\rho} f^{**}(\nabla    u)\, dx+\!\!\int_{ E_\rho} f^{**}(\nabla u)  \, dx
\end{eqnarray*}
that is
\begin{eqnarray*}
\int_{ B_\rho(\bar x)\setminus E_\rho}f^{**}(\nabla   \tilde  u_\rho)\, dx+\int_{  E_\rho}f^{**}(\nabla   \tilde  u_\rho)\, dx
\le \int_{ B_\rho(\bar x)\setminus E_\rho}f^{**}( \nabla  u) dx+
\int_{ E_\rho} f^{**}(\nabla    u)\, dx
\end{eqnarray*}
 which {yields} \eqref{claim1}.

 \noindent Let us assume that $g(\bar x)>0$ and recall that, in this case, we found $\tilde u_{\rho}(x)\le u(x)$ in $\Omega$. In order to prove \eqref{claim2}, we assume,  without loss of generality, $\bar x$ as a Lebesgue point for $g(x)$ and assume that it is of density $1$ for the set  
 $$\Omega_{\bar x}:=\left\{x\in \Omega: g(x)\ge \frac{g(\bar x)}{2}\right\}.$$
 It follows that, for a fixed $\sigma>0$ there exists $\rho(\sigma)>0$ such that
\begin{eqnarray}\label{uno}|B_\rho(\bar x)\cap\Omega_{\bar x}|\ge (1-\sigma)|B_\rho(\bar x)|
\end{eqnarray}
and
\begin{eqnarray}\label{due}|B_\rho(\bar x)\setminus \Omega_{\bar x}|\le \sigma |B_\rho(\bar x)| \end{eqnarray}
 for every $\rho<\rho(\sigma)$. From now on, we shall assume $\rho$ such that \eqref{prima11}, \eqref{prima112}, \eqref{uno} and \eqref{due} hold.

 \noindent Showing  inequality \eqref{claim2} is obviously equivalent to prove 
 \begin{eqnarray}\label{scritequiv}
&&\int_{ E_\rho} g(x)( u-\tilde u_\rho)   \, dx=\int_{B_\rho(\bar x)} g(x)( u-\tilde u_\rho)   \, dx\ge 0.
\end{eqnarray}
Observe that
 \begin{eqnarray*}
&&\int_{B_\rho(\bar x)} g(x)( u-\tilde u_\rho)   \, dx=\int_{B_\rho(\bar x)\cap \Omega_{\bar x}} g(x)( u-\tilde u_\rho)   \, dx+\int_{B_\rho(\bar x)\setminus \Omega_{\bar x}} g(x)( u-\tilde u_\rho)   \, dx\cr\cr
&&\ge \int_{B_{\frac{\delta}{\delta'}\frac{\rho}{9}}(\bar x)\cap \Omega_{\bar x}} g(x)( u-\tilde u_\rho)   \, dx+\int_{B_\rho(\bar x)\setminus \Omega_{\bar x}} g(x)( u-\tilde u_\rho)   \, dx.
\end{eqnarray*}
Since analogous calculations to those we have done in \eqref{seconda112} yield
$ u-\tilde u_\rho\ge \frac{\delta}{6}\rho$ on the ball $B_{\frac{\delta}{\delta'}\frac{\rho}{9}}$,  we can deduce  that 
$$\int_{B_{\frac{\delta}{\delta'}\frac{\rho}{9}}(\bar x)\cap \Omega_{\bar x}} g(x)( u-\tilde u_\rho)   \, dx\ge \frac{g(\bar x)}{2}\frac{\delta}{6}\rho \, |B_{\frac{\delta}{\delta'}\frac{\rho}{9}}(\bar x)\cap \Omega_{\bar x}|.$$
Moreover, recalling the definition of $u_\rho$, the inequality \eqref{prima11}, the definition of $w_\rho$ and the fact that $v(x)\ge \delta |x|\ge 0$, we have
$$u(x)-\tilde u_{\rho}(x)=(u(x)-w_{\rho}(x))\chi_{E_\rho}(x)\le \frac{\delta}{2}\rho-v(x-\bar x)+\frac{\delta}{3}\rho\le \delta \rho.$$
It follows that
 \begin{eqnarray*}
&&\int_{B_\rho(\bar x)} g(x)( u-\tilde u_\rho)   \, dx\ge
\frac{g(\bar x)}{2}\frac{\delta}{6}\rho \, |B_{\frac{\delta}{\delta'}\frac{\rho}{9}}(\bar x)\cap \Omega_{\bar x}|-\alpha \delta \rho\, |B_\rho(\bar x)\setminus \Omega_{\bar x}|\cr\cr&&
\ge \frac{g(\bar x)}{2}\frac{\delta}{6}\rho \, (1-\sigma)|B_{\frac{\delta}{\delta'}\frac{\rho}{9}}(\bar x)|-\alpha \delta \rho\, \sigma |B_\rho(\bar x)|
\cr\cr
&&=-\left( \frac{g(\bar x)}{2}\frac{\delta}{6}\,|B_{\frac{\delta}{\delta'}\frac{\rho}{9}}(\bar x)|+ \alpha \delta\, |B_\rho(\bar x)|\right)\rho\sigma +\frac{g(\bar x)}{2}\frac{\delta}{6}\,|B_{\frac{\delta}{\delta'}\frac{\rho}{9}}(\bar x)|\, \rho\cr\cr
&&=\omega_n \rho^{n+1}\delta\left[-\left( \frac{g(\bar x)}{2}\frac{1}{6}\,\left({\frac{\delta}{\delta'}\frac{1}{9}}\right)^n+ \alpha \right)\sigma +\frac{g(\bar x)}{2}\frac{1}{6}\left(\frac{\delta}{\delta'} \frac{1}{9}\right)^n\, \right],
\end{eqnarray*}
where we denoted by $\omega_n$ the measure of the unit ball in $\R^n$ and we also used \eqref{uno} and \eqref{due}. Clearly, for $\sigma $ sufficiently small, the right-hand side of the previous inequality is positive and hence \eqref{scritequiv} holds. 

\noindent  In case $g(\bar x)<0$, the previous arguments are valid taking
$$\tilde v_\rho(x):=\max\{w_\rho(x)-u(x), 0\}\quad \text{in}\, B_\rho(\bar x).$$
We conclude that inequality \eqref{claim} holds.
\end{proof}

 \begin{corollary}\label{cor:gsign}
 Let $f$ be such that $f^{**}$ satisfies the hypotheses (F$^*$1)-(F$^*$4). Assume that $\Omega$ is a $R$-uniformly convex set and that $g\colon \Omega\to \R$ is a measurable bounded function that doesn't change sign.   For a fixed function $\phi$ satisfying the LBSC, any minimizer $u$ of $\mathcal{I}$  is  locally Lipschitz in $\Omega$.
 \end{corollary}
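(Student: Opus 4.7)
My plan is to reduce Corollary~\ref{cor:gsign} to Theorem~\ref{nonconvex} through a sign-preserving perturbation of $g$, and then to exploit the identity $\min\mathcal{I}=\min\mathcal{I}^{**}$ so that every minimizer of $\mathcal{I}$ is automatically a minimizer of $\mathcal{I}^{**}$, from which local Lipschitz continuity follows via Theorem~\ref{mainth}.

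Without loss of generality assume $g\ge 0$ (the case $g\le 0$ is symmetric, replacing $g+\eta$ by $g-\eta$). For $\eta>0$ set $g_\eta:=g+\eta$, so that $g_\eta\ge\eta>0$ and $|\{g_\eta=0\}|=0$. Consider
\[
\mathcal{I}_\eta(v):=\int_\Omega f(\nabla v)+g_\eta(x)\,v(x)\,dx.
\]
Theorem~\ref{nonconvex} applied to $\mathcal{I}_\eta$ produces a locally Lipschitz minimizer $u_\eta$. Inspecting the proof of that theorem, $u_\eta$ is in fact obtained as a minimizer of the convexified $\mathcal{I}_\eta^{**}$ for which the pyramidal construction forces $\nabla u_\eta(x)\notin\bigcup_i\mathrm{int}\,S_i$ a.e.\ in $\Omega$. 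Consequently $f(\nabla u_\eta)=f^{**}(\nabla u_\eta)$ a.e., so that $\mathcal{I}(u_\eta)=\mathcal{I}^{**}(u_\eta)$ and $\mathcal{I}_\eta(u_\eta)=\mathcal{I}_\eta^{**}(u_\eta)$.

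To identify the two infima, take a minimizer $\tilde u$ of $\mathcal{I}^{**}$, which exists and is locally Lipschitz by Theorem~\ref{mainth} applied to $f^{**}$ (which fulfills (F1)--(F3) in view of ($F^{*}$1)--($F^{*}$3)). Using $\tilde u$ as a competitor for $u_\eta$ in $\mathcal{I}_\eta^{**}$ yields
\[
\mathcal{I}^{**}(u_\eta)+\eta\int_\Omega u_\eta\,dx=\mathcal{I}_\eta^{**}(u_\eta)\le\mathcal{I}_\eta^{**}(\tilde u)=\mathcal{I}^{**}(\tilde u)+\eta\int_\Omega \tilde u\,dx,
\]
whence
\[
\min\mathcal{I}\le\mathcal{I}(u_\eta)=\mathcal{I}^{**}(u_\eta)\le\min\mathcal{I}^{**}+\eta\Big(\int_\Omega\tilde u\,dx-\int_\Omega u_\eta\,dx\Big).
\]
The barriers supplied by Theorem~\ref{thm:barriere}, together with $\|g_\eta\|_\infty\le\|g\|_\infty+1$ for $\eta\le 1$, deliver a uniform-in-$\eta$ $L^\infty$ estimate on $u_\eta$; letting $\eta\to 0$ then gives $\min\mathcal{I}\le\min\mathcal{I}^{**}$. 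The reverse inequality $\min\mathcal{I}^{**}\le\min\mathcal{I}$ is immediate from $f^{**}\le f$, so $\min\mathcal{I}=\min\mathcal{I}^{**}$.

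For an arbitrary minimizer $u$ of $\mathcal{I}$ one now has
\[
\mathcal{I}^{**}(u)\le\mathcal{I}(u)=\min\mathcal{I}=\min\mathcal{I}^{**}\le\mathcal{I}^{**}(u),
\]
forcing $u$ to be a minimizer of $\mathcal{I}^{**}$ as well, and Theorem~\ref{mainth} then yields the claimed local Lipschitz continuity. The delicate point I expect is the second paragraph: one needs to extract from the proof of Theorem~\ref{nonconvex} both the a.e.\ gradient inclusion $\nabla u_\eta\notin\bigcup_i\mathrm{int}\,S_i$ for the minimizer it produces and a uniform $L^\infty$ bound on $u_\eta$ as $\eta\to 0$, neither of which is explicitly part of the stated conclusion; both however follow from the construction once one observes that the barriers of Theorem~\ref{thm:barriere} depend on $g_\eta$ only through $\|g_\eta\|_\infty$.
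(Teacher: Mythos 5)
Your argument is correct, but it takes a genuinely different route from the paper's. You reduce the corollary to Theorem~\ref{nonconvex} by a sign-preserving perturbation $g\mapsto g+\eta$, then establish $\inf\mathcal{I}=\min\mathcal{I}^{**}$ by sending $\eta\to 0$ with the uniform $L^\infty$ bound coming from the barriers, and finally conclude that every minimizer of $\mathcal{I}$ is also a minimizer of $\mathcal{I}^{**}$, to which Theorem~\ref{mainth} applies. The paper instead works entirely with the unperturbed $g$: starting from one minimizer $u$ of $\mathcal{I}^{**}$, it runs Step~1 of the proof of Theorem~\ref{nonconvex} on the set $\Omega_0=\{x:\nabla u(x)\in\cup_i\mathrm{int}\,S_i\}$ (which may now have positive measure because $|\{g=0\}|$ need not vanish), extracts a Vitali covering of $\Omega_0$ by the pyramidal modification sets $E_{\rho_m}$, and replaces $u$ by the pyramid $w_{\rho_m}$ on each $E_{\rho_m}$ to produce $\bar u$ with $\nabla\bar u\notin\cup_i\mathrm{int}\,S_i$ a.e.; the non-increase of $\mathcal{I}^{**}$ under each modification (using only the sign of $g$ for the lower-order term) shows $\bar u$ is still a minimizer of $\mathcal{I}^{**}$ and hence of $\mathcal{I}$, from which the chain $\mathcal{I}(v)=\mathcal{I}(\bar u)=\mathcal{I}^{**}(\bar u)\le\mathcal{I}^{**}(v)\le\mathcal{I}(v)$ forces any minimizer $v$ of $\mathcal{I}$ to minimize $\mathcal{I}^{**}$ too. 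Both approaches hinge on the identity $\min\mathcal{I}=\min\mathcal{I}^{**}$; yours reaches it by approximation and a limit, while the paper's reaches it by an explicit global construction. Your route is shorter and avoids the Vitali covering, but it leans on reading the proof of Theorem~\ref{nonconvex} (not just its statement) to extract that $u_\eta$ minimizes $\mathcal{I}_\eta^{**}$ with $\nabla u_\eta\notin\cup_i\mathrm{int}\,S_i$ a.e., and on the barrier estimates being uniform in $\eta$---both true, and you flag both correctly. The paper's route is self-contained within the nonconvex framework and, as a byproduct, exhibits the explicit locally Lipschitz minimizer $\bar u$ of $\mathcal{I}$.
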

 \begin{proof}
It is enough to reason similarly we have done in the proof of Theorem \ref{nonconvex}. We start with $u$ a minimizer of $\mathcal{I}^{**}$ that is locally Lipshitz continuous thanks to Theorem \ref{mainth}. Consider the set $\Omega_0=\{x\in\Omega: \nabla u(x)\in\cup_{i\in\N}{\rm int}S_i\}$  in place of $\tilde \Omega$ and assume that $\Omega_0$ has positive measure. Without loss of generality, we assume $g(x)\ge 0$.

\noindent Arguing as in {\it Step 1} of Theorem \ref{nonconvex}, we can prove that for every $\bar x\in\Omega_0$ there exists a ball centered in $\bar x$ where  the function $u$ can be modified  to obtain a function $\tilde u$ that is still locally Lipschitz and $\tilde u\le u$. We notice that  {\it Step 2}  becomes easier because inequality 
\begin{eqnarray}
\int_{E_\rho} g(x) \tilde u\, dx\le \int_{E_\rho} g(x)  u\, dx.
\end{eqnarray}
is an  immediate consequence of the constant sign of  $g$. Recalling \eqref{claim1}, it follows that   
\begin{eqnarray}\label{con*}
\int_{E_\rho} f^{**}(\nabla \tilde u)+ g(x) \tilde u\, dx\le \int_{E_\rho}f^{**}(\nabla  u)+ g(x)  u\, dx.
\end{eqnarray}

Next aim is to prove the existence of another minimizer $\bar u$ such that $\nabla \bar u(x)\in \R^n\setminus \cup_{i}\,\rm{int }S_i$ for a.e.  $x$ in which $\bar u$ is differentiable.

\noindent We first observe that inequalities \eqref{prima11} and \eqref{prima112} are still valid in  every ball $B_{\rho_h}(\bar x)$ with $\rho_h\le \rho$. Hence defining $w_{\rho_h}$  as in \eqref{terza1}, we  obtain  that the closed corresponding sets
$E_{\rho_h}$
cover $ \Omega_0$ in the sense of Vitali. Indeed, analogous estimates to \eqref{seconda11} and \eqref{seconda112} with  $\rho_h$ in place of $\rho$ yield $$\frac{|E_{\rho_h}|}{|B_{\rho_h}|}\ge \frac{|B_{\frac{\delta}{\delta'}\frac{2\rho_h}{9}}|}{|B_{\rho_h}|}=\left(\frac{2\delta}{9\delta'}\right)^n.$$
We deduce that there exists a countable family of points $\{x_m\}$ in $\Omega_0$ and of pairwise disjoint sets $E_{\rho_m}{\subset B_{\rho_m}(x_m)}$  such that
$$|\Omega_0\setminus \cup_m {E_{\rho_m}}|=0.$$
Now define
\begin{eqnarray*}
\bar u=\begin{cases}
u& \text{in}\,\, \Omega\setminus  \Omega_0\\
{w_{\rho_m}} &\text{in} \,\,  {E_{\rho_m}}.
\end{cases}
\end{eqnarray*}
It follows that $\bar u$ is locally Lipschitz continuous. 
 Moreover 
\begin{eqnarray}\label{minimo}
&&\mathcal{I}^{**}(\bar u)=\int_{\Omega} f^{**}(\nabla \bar u)+g(x)\bar u\, dx\cr\cr
&&=\int_{\Omega\setminus  \Omega_0} f^{**}(\nabla  u)+g(x) u\, dx+\sum_m\int_{{E_{\rho_m}}} f^{**}(\nabla {w_{\rho_m}})+g(x){w_{\rho_m}}\, dx\cr\cr
&&\le\int_{\Omega\setminus \Omega_0} f^{**}(\nabla  u)+g(x) u\, dx+\sum_m\int_{{E_{\rho_m}}} f^{**}(\nabla u)+g(x)u\, dx\cr\cr
&&= \mathcal{I}^{**}(u)
\end{eqnarray}
where we used that $\bar u={w_{\rho_m}}= {\tilde u_{\rho_m}}$ in ${E_{\rho_m}}$ and inequality \eqref{con*}.
The minimality property of $u$  implies that the inequality in \eqref{minimo} is actually an  equality and $\bar u$ is a minimizer of $\mathcal{I}^{**}$ such that $\nabla \bar u\in \R^n\setminus\cup_i {\rm i nt} S_i$ almost everywhere. Moreover,   
assumption ($F^*$4) implies $\mathcal{I}(\bar u)=\mathcal{I}^{**}(\bar u)$, so that $$\mathcal{I}(\bar u)=\mathcal{I}^{**}(\bar u)\le \mathcal{I}^{**}( u)\le \mathcal{I}( u)$$ 
that is $\bar u$ is a locally Lipschitz minimizer of  $\mathcal{I}$. 

\noindent Suppose now that $v$ is another minimizer of  $\mathcal{I}$. It holds that
$$\mathcal{I}(v)=\mathcal{I}(\bar u)=\mathcal{I}^{**}(\bar u)\le \mathcal{I}^{**}(v)\le \mathcal{I}(v)$$
that is $$\mathcal{I}^{**}(v)=\mathcal{I}^{**}(\bar u)$$
and hence $v$ is a minimizer for $\mathcal{I}^{**}$. At this point, Theorem \ref{mainth} leads to the conclusion.
\end{proof}

\medskip 

 {\it Acknowledgements.}  The authors are supported by GNAMPA Project 2024 ``Fenomeno di Lavrentiev, Bounded Slope Conditon e  regolarità per minimi di funzionali integrali con crescite non standard e lagrangiane non uniformemente convesse'' (Code CUP-E53C22001930001). F.~Giannetti is supported by GNAMPA Project 2025 (Code CUP E5324001950001) ``Regolarità di soluzioni di equazioni paraboliche a crescita nonstandard degeneri''  and  G. Treu is supported by Unipd DOR project  ``Equazioni differenziali nonlineari, problemi variazionali e controllo'' (Code: DOR2340044).

\medskip 

\noindent {\it Data availability statement.} The authors declare that there is no data used to support the findings of the presented results.

\medskip

\noindent {\it Conflict of interest} The authors are not aware of any Conflict of interest.

\end{document}